\newtheorem{thm}{Theorem}[section]
\newtheorem*{thm*}{Theorem}
\newtheorem*{conj*}{Conjecture}
\newtheorem{cor}[thm]{Corollary}
\newtheorem{lem}[thm]{Lemma}
\newtheorem{lemma}[thm]{Lemma}
\newtheorem{prop}[thm]{Proposition}
\theoremstyle{remark}
\newtheorem{remark}[thm]{Remark}
\newtheorem{notn}[thm]{Notation}
\theoremstyle{definition}
\newtheorem{defn}[thm]{Definition}
\newcounter{claim}[thm]
\newcommand{\calS}{\mathcal{S}}
\newcommand{\prodS}{\mathrm{Prod}}
\DeclareMathOperator{\Alt}{Alt}
\DeclareMathOperator{\Sym}{Sym}
\DeclareMathOperator{\Aut}{Aut}
\DeclareMathOperator{\Out}{Out}
\newcommand{\PGL}{\mathrm{PGL}}
\newcommand{\GL}{\mathrm{GL}}
\newcommand{\PSL}{\mathrm{PSL}}
\newcommand{\SL}{\mathrm{SL}}
\newcommand{\Sp}{\mathrm{Sp}}
\newcommand{\PSU}{\mathrm{PSU}}
\newcommand{\PSp}{\mathrm{PSp}}
\newcommand{\POmega}{\mathrm{P}\Omega}
\newcommand{\GU}{\mathrm{GU}}
\newcommand{\SU}{\mathrm{SU}}
\newcommand{\wh}{\widehat}
\title{Finite simple groups have many classes of $p$-elements}
\author{Michael Giudici, Luke Morgan and  Cheryl E.~Praeger\\Department of Mathematics and Statistics\\
The University of Western Australia, Perth, Australia
}
\begin{document}

\maketitle

%
%

\begin{abstract}
 For an element $x$ of a finite group $T$, the $\Aut(T)$-class of $x$ is the set $\{ x^\sigma\mid \sigma\in \Aut(T)\}$. We prove that the order $|T|$ of a finite nonabelian simple group $T$ is bounded above by a function of the parameter $m(T)$, where $m(T)$ is the maximum, over all primes $p$, of the number of $\Aut(T)$-classes of elements of $T$ of $p$-power order.  This bound is a substantial generalisation of results of Pyber, and of H\'ethelyi and K\"ulshammer, and it has implications for relative Brauer groups of finite extensions of global fields.  

 \medskip
 \noindent \textbf{Keywords:} finite simple group; $p$-elements; conjugacy classes; order bounds.

    \bigskip
        \emph{Dedicated with admiration and thanks to the memory of our colleague Gary M.~Seitz.}
\end{abstract}

\section{Introduction}
In 1992 Laci Pyber \cite{Py92} showed that a group of order $n$ contains at least $O(\log n/(\log\log n)^8)$ conjugacy classes of elements. This solved a problem of Brauer from 1963 \cite{Br63}, who had asked for a significant improvement on his lower bound of $\log\log n$. In Subsection~\ref{s:discussion} we briefly discuss the interesting story around these bounds which date back to work of Landau in 1903 and extend to recent work in 2017.  The special case of Pyber's bound for a nonabelian simple group $T$ could be turned around to state that $|T| < c^{f(m)}$ where $m$ is the number of $\Aut(T)$-classes in $T$, $c$ is a constant, and $f$ is the particular function $f(m)=(\log m)^2\cdot \log\log m$. This alternative statement of Pyber's result was used in  \cite[Theorem 4.4]{kron} to prove a conjecture about maximal subgroups of a finite group which are `covering subgroups', and in turn, this application had consequences for Kronecker classes of algebraic number fields (see \cite[Section 4]{kron}). 

Many classical results concerning conjugacy classes of elements in groups have analogues in the case where the conjugacy classes are restricted to those consisting of elements of prime power order. For example, given a core-free subgroup $H$ of a group $G$, not only is a there a conjugacy class of elements of $G$ that does not meet $H$,   but Fein, Kantor and Schacher  \cite{FKS} show that there is a conjugacy class of elements of prime power order that avoids $H$. Similarly, not only does every nonlinear irreducible character of a finite group vanish on some conjugacy class of elements, but Malle, Navarro and Olsson \cite{MNO} show that each such character must vanish on some conjugacy class of elements of prime power order. Such analogues usually have interesting applications: for example, the Fein-Kantor-Schacher result  is equivalent (see \cite[\S3]{FKS}) to the fact that the relative Brauer group of a nontrivial finite extension of global fields is infinite.

In this paper we prove  a new bound (Theorem~\ref{t:main}) on the order of a finite simple group related to its $p$-elements, that is, elements of $p$-power order for various primes $p$. The bound is a substantial generalisation of the results of Pyber and others, in that the parameter $m$ above is replaced by 
 \begin{equation}\label{mT}
m(T)=\max_{\text{primes}\ p} m_p(T), \ \text{where} \  m_p(T) = \#\{ \mbox{$\Aut(T)$-classes of elements of $p$-elements in $T$}\}.
\end{equation} 
\begin{thm}\label{t:main}
There exists an  increasing  function $f$ on the natural numbers such that, for a finite nonabelian simple group $T$, the  order of $T$ is at most $f(m(T))$. 
\end{thm}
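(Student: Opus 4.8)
The plan is to bound $|T|$ using the classification of finite simple groups, treating the families separately and showing in each case that if $m(T)$ is small then $T$ is small. The key observation driving everything is that $m(T) = \max_p m_p(T)$ only needs a \emph{single} good prime: to get a large lower bound on $m(T)$ it suffices to exhibit one prime $p$ for which $T$ has many $\Aut(T)$-classes of $p$-elements. So for each family I would search for a convenient prime (typically the defining characteristic for groups of Lie type, or a prime dividing the order for alternating and sporadic groups) and count $\Aut(T)$-classes of $p$-power-order elements from below.

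First I would dispose of the easy endpoints. The sporadic groups and the finitely many small exceptional cases contribute only boundedly, so they affect only the constant implicit in $f$; since we merely need \emph{some} increasing $f$, these can be absorbed. For the alternating groups $T = \Alt(n)$, I would fix $p = 2$ (or any small prime) and count classes of elements whose cycle type consists of $2$-power-length cycles; the number of such classes grows with $n$ because it is governed by partitions into parts that are powers of $p$, and passing from $\Sym(n)$-classes to $\Aut(T)$-classes only loses a bounded factor. This already forces $n$, and hence $|\Alt(n)|$, to be bounded in terms of $m(T)$.

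The substantial work is the groups of Lie type. Here I would take $p$ to be the defining characteristic and count $\Aut(T)$-classes of unipotent elements, i.e.\ $p$-elements. The number of unipotent classes in a group of Lie type of rank $r$ grows with $r$ (it is bounded below by something increasing in $r$, e.g.\ via counting distinct Jordan types / partitions in the classical case), and the graph, diagonal and field automorphisms comprising $\Out(T)$ can fuse at most a bounded number of these classes together per class, with the field-automorphism contribution controlled by $\log_p q$. Thus from a lower bound on $m_p(T)$ I would extract an upper bound on the rank $r$ and on $\log_p q$, and hence on $\dim$ and $q$, giving an upper bound on $|T|$. The main obstacle, and the delicate technical heart of the argument, is making the unipotent class count effective and uniform across \emph{all} the Lie families (classical and exceptional, twisted and untwisted) while simultaneously controlling the fusion caused by $\Aut(T)$: field automorphisms can merge classes in families parametrised over subfields, and graph/diagonal automorphisms permute classes, so I must ensure that after quotienting by $\Aut(T)$ the class count still grows with both $r$ and $q$. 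A clean way to handle the $q$-dependence is to choose instead a prime $p$ dividing $q-1$ (or a suitable cyclotomic value) and count classes of semisimple $p$-elements inside a large torus, whose number grows with $q$; balancing the two choices of prime — defining characteristic to capture the rank, and a torus-dividing prime to capture $q$ — and taking the maximum over $p$ as in $m(T)$, should yield the required simultaneous bound on both parameters and complete the proof.
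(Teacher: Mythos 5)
Your skeleton agrees with the paper's in two of its three prongs: sporadic groups absorbed into the constant, alternating groups handled by counting cycle types of $p$-elements (the paper uses elements of order $3$, giving $\lfloor m/3\rfloor$ classes; your $2$-power version works equally well), and unipotent Jordan types in the defining characteristic to bound the rank of a group of Lie type (this is exactly the paper's Lemma~\ref{lem:unipbound}, though making "clearly not fused" precise in even characteristic and for triality needs Aschbacher--Seitz and Burness). The genuine gap is in your second prong for Lie type groups, the one meant to bound $q$. You assert that for a suitable \emph{single} prime $p$ dividing $q-1$ (or a cyclotomic value) the number of $\Aut(T)$-classes of semisimple $p$-elements "grows with $q$". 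This is unjustified and in general false: for a fixed prime $s$ coprime to the characteristic, the $s$-elements of a cyclic maximal torus form its cyclic Sylow $s$-subgroup, so the number of classes of $s$-elements arising there is at most $(q^m-1)_s$, the $s$-part of the torus order, and nothing forces any single prime's part to grow with $q$. The relevant cyclotomic factor can perfectly well be a product of many distinct primes each to the first power, in which case \emph{every} individual prime contributes only boundedly many classes, and no one choice of $p$ captures $q$. At that point your "balancing" of the two primes has nothing to balance.

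The paper closes exactly this hole with a multi-prime argument, and that is the technical heart your proposal is missing. For each prime $s$ in the set $\calS(m)$ of primitive divisors of $q^m-1$, Lemma~\ref{lem: bounding phi s} shows that the number of $\Aut(T)$-classes of elements of order $|S|=(q^m-1)_s$ equals $\phi(|S|)/r$ with $r=|N_{\Aut(T)}(S):C_{\Aut(T)}(S)|$ dividing roughly $2am$ --- and controlling $r$ itself requires structural input (normalisers of Singer-type cyclic subgroups for classical groups; the Liebeck--Saxl--Seitz and Craven classifications of maximal torus normalisers for the exceptional groups, which the paper treats by a separate argument). This bounds each individual $s$-part by about $2c_2(n)a$. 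Then comes the step absent from your proposal: since each $s-1$ divides the \emph{fixed} integer $c_2(n)a$ and distinct primes give distinct divisors, the number of primes in $\calS(m)$ is less than $2\sqrt{c_2(n)a}$. Multiplying the per-prime bound by the bound on the number of primes gives $q<\prodS(m,q)\cdot(\text{small factor})\leqslant (2c_2(n)a)^{2\sqrt{c_2(n)a}}$, and a final analytic comparison of $p^a$ against a function of shape $a^{C\sqrt{a}}$ (Lemmas~\ref{lem: sqrtx over log x} and~\ref{lem: log bound}) is needed to bound $p$ and $a$ separately, since the per-prime bounds all carry a factor of $a$. Without the divisor-counting bound on the number of primes and this last analytic step, smallness of $m(T)$ simply does not bound $q$, so the proposal as written does not prove the theorem.
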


In the proof of Theorem~\ref{t:main} for exceptional groups of Lie type, we are indebted to the work of Gary Seitz, to whose memory this paper is dedicated, and his co-authors Martin Liebeck and Jan Saxl, for their classification of the subgroups of maximal rank of these groups \cite{LiebeckSaxlSeitz}. Their results gave us the detailed information about certain tori and their normalisers on which our proof is based.   In addition, in the proof of Theorem~\ref{t:main} for classical groups, we use the description by Aschbacher and Seitz \cite{aschseitz} of conjugacy classes of involutions in Chevalley groups of even characteristic to bound the dimension (Lemma~\ref{lem:unipbound}).


 The function $f(n)$ we obtain in the proof of Theorem~\ref{t:main} involves an $n!$ term. It is possible that a better function might be obtained, see Remark~\ref{rem:on the function} for further comments.
 Since the function $f(n)$ in   Theorem~\ref{t:main} is increasing, the bound  can be turned around to give a lower bound in terms of $|T|$ for the number of $\Aut(T)$-classes of $p$-elements in $T$.

\begin{cor}\label{c:main}
    There exists an increasing function $g$ on the natural numbers such that, for a finite nonabelian simple group $T$, there exists a prime $p$ dividing $|T|$ such that the number of $\Aut(T)$-classes of elements of $p$-power order in $T$ is at least $g(|T|)$.
\end{cor}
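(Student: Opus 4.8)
The plan is to deduce this corollary directly from Theorem~\ref{t:main} by inverting the increasing function $f$. Since $f$ is increasing and satisfies $|T|\le f(m(T))$, the natural candidate for $g$ is the generalised inverse of $f$, namely
\[
g(n):=\min\{\, k\in\mathbb{N} : f(k)\ge n\,\}.
\]
First I would check that this is well defined and monotone. Because the orders of finite nonabelian simple groups are unbounded while all of them obey the bound of Theorem~\ref{t:main}, the function $f$ must itself be unbounded, so for every $n$ the set $\{k : f(k)\ge n\}$ is nonempty and the minimum exists. Enlarging $n$ only makes the constraint $f(k)\ge n$ harder to meet, so $g$ is nondecreasing; moreover $g(n)\to\infty$, which permits a small modification (replacing $g$ by a strictly increasing minorant) should one insist on strict monotonicity rather than the monotone sense of ``increasing'' already used for $f$.

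Next I would verify the key inequality $m(T)\ge g(|T|)$. By Theorem~\ref{t:main} we have $f(m(T))\ge |T|$, so $m(T)$ belongs to the set $\{k : f(k)\ge |T|\}$ whose minimum is $g(|T|)$; hence $m(T)\ge g(|T|)$. This is the only place where the substantive content of Theorem~\ref{t:main} is invoked.

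It then remains to convert the statement about the parameter $m(T)=\max_{p} m_p(T)$ into an honest statement about a single prime dividing $|T|$. Here I would argue that the maximum defining $m(T)$ is attained at a prime $p\mid |T|$: every nontrivial finite group, in particular $T$, contains a nonidentity element of prime power order, so $m_p(T)\ge 2$ for some prime $p$ dividing $|T|$, whence $m(T)\ge 2$; on the other hand, for any prime $q\nmid |T|$ the only $q$-element of $T$ is the identity, so $m_q(T)=1$. Thus the maximum is realised by some prime $p\mid |T|$, and for that prime the number of $\Aut(T)$-classes of elements of $p$-power order equals $m_p(T)=m(T)\ge g(|T|)$, which is exactly the assertion of the corollary.

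I expect no serious obstacle: the corollary is a formal monotone-inversion of Theorem~\ref{t:main}, and all the real work is contained in that theorem. The only points needing a little care are (i) confirming that the maximising prime genuinely divides $|T|$, rather than being an irrelevant prime contributing only the trivial class, and (ii) the routine passage from the nondecreasing pseudo-inverse to a strictly increasing $g$ if that is demanded by the meaning of ``increasing''.
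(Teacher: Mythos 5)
Your proposal is correct and takes essentially the same route as the paper: the paper derives Corollary~\ref{c:main} as an immediate inversion of the increasing bound $|T|\leqslant f(m(T))$ of Theorem~\ref{t:main}, which is exactly what your generalised inverse $g(n)=\min\{k : f(k)\geqslant n\}$ accomplishes. The details you supply (unboundedness of $f$, monotonicity of $g$, and the observation that the maximum defining $m(T)$ is attained at a prime dividing $|T|$ since $m_q(T)=1$ for primes $q\nmid|T|$) are the routine verifications the paper leaves implicit.
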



There are numerous bounds in the literature that relate $|T|$ with various parameters concerning numbers of conjugacy classes or $\Aut(T)$-classes. For example, by \cite[Theorems 1.2 and 1.4]{MN16},  for a given prime $p$ dividing $|T|$, the order $|T|$ is bounded above in terms of the number of its $p$-regular conjugacy classes (elements of order coprime to $p$) and also, apart from certain rank 1 Lie type simple groups,  $|T|$ is bounded above in terms of the number of its $p$-singular conjugacy classes (elements of order a multiple of $p$).  
One motivation for proving Theorem~\ref{t:main} is a conjecture concerning finite groups $G$ with a  proper subgroup that meets all $\Aut(G)$-classes of elements of $G$ of prime power order \cite[Conjecture $4.3'$]{kron}. Thus, rather than considering $p$-singular or $p$-regular elements for some fixed prime $p$, we must work with all elements of prime power order. In future work \cite{GMP} we apply Theorem~\ref{t:main} to prove an important case of \cite[Conjecture $4.3'$]{kron}, which has consequences for relative Brauer groups of field extensions as discussed in \cite{FKS, GMP, G90}.
%

Finally we note that many bounds of this type in the literature are available for general finite groups, and it would be interesting to know if the bounds in Theorem~\ref{t:main} and Corollary~\ref{c:main} can be used to obtain similar bounds for larger families of finite groups.

This paper is organised  as follows. In Section~\ref{s:prod} we prove some preliminary numerical results and a result relating the normalisers of cyclic subgroups $S$ of a group $G$ with our parameter $m(G)$ \eqref{mT}. We treat the alternating groups in Section~\ref{sec:alt} and the bulk of the work takes place in Sections~\ref{sec:class} and \ref{sec:excep} where we consider the classical groups and the exceptional groups of Lie type, respectively. Finally in Section~\ref{sec:proof} we complete the proof of Theorem~\ref{t:main}.

\subsection{Commentary on Landau's and Pyber's theorems}\label{s:discussion}

Landau's theorem~\cite{La} from 1903 states that, for a given positive integer $k$, there are only finitely many finite groups having exactly $k$ conjugacy classes of elements, and so such a group must have order bounded in terms of $k$. Brauer~\cite{Br63} made this bound explicit in 1963, showing that a group of order $n$ has at least $\log\log n$ conjugacy classes, and asked for a substantially better bound. Providing an improvement was the main focus of Pyber's 1992 paper \cite{Py92}, where he proved that a group of order $n\geqslant  4$ must have at least $c\log n/(\log\log n)^8$ conjugacy classes for some `computable constant' $c$. The  relevance for this paper is Pyber's bound for nonabelian simple groups  \cite[Lemma 4.4]{Py92}: if $T$ is a finite nonabelian simple group and $a=|\Aut(T)|$, then the number of $\Aut(T)$-classes in $T$ is at least 
    \[
    2^{c(\log a/\log\log a)^{1/2}},\quad \text{for some constant $c$.}
    \]   
Since 1992 there have been numerous contributions that strengthened these bounds (see \cite{BMV} for an overview). Currently the best lower bound for the number  $k(G)$ of conjugacy classes of an arbitrary finite group $G$ is given by Baumeister et al in 2018, \cite[Theorem 1.1]{BMV}:
\[
\forall\ \epsilon>0, \ \exists\ \delta>0\ \ \text{such that, $\forall$\  finite groups $G$ with  $|G|\geqslant  3$, }\ k(G)\geqslant  \delta \log |G|/(\log\log |G|)^{3+\epsilon}.
\]
There are many better lower bounds available for restricted classes of groups, for example, if $G$ is soluble then Keller~\cite{K11} proved that $ k(G)\geqslant  c \log |G|/(\log\log |G|)$ for some constant $c$; a purely logarithmic lower bound $k(G)>\log_3 |G|$ was given in \cite[Theorem 1.2]{BMV} for groups with trivial soluble radical; and a better than logarithmic lower bound was obtained for nilpotent groups by Jaikin-Zapirain~\cite{Za}. These bounds have been exploited to obtain related bounds concerning irreducible complex representations, for example, bounding the number of irreducible characters of odd degree \cite{GY24, HKY21} in connection with the McKay conjecture. Also, as mentioned above, there are various results in \cite{MN16} that give lower bounds for the number of conjugacy classes of $p$-regular elements, or $p$-singular elements, or the total number of classes of elements of prime-power order (adding over the prime divisors) in \cite{HethKul}, but to our knowledge our bound in terms of classes of $p$-elements, for a certain single prime $p$, is new.  

\begin{remark}\label{rem:mexp}
 Our proof of Theorem~\ref{t:main} for simple classical groups    hints towards a possible bound for the order of a simple group in terms of another property.   For a group $G$ and a prime $s$ we define
\[m_{s\text{-exp}}(G) = \# \{ \Aut(G)\text{-classes of elements of order }\exp(G)_s\}
\]
where $\exp(G)_s$ is the $s$-part of the exponent  $\exp(G)$,
and set $m_{\text{exp}}(G)$ to be the maximum of $m_{s\text{-exp}}(G)$ over all primes $s$ dividing $|G|$. In addition, for a set $\mathcal S$ of primes  we set $m_{\mathcal S\text{-exp}}(G)$ to be the maximum of $m_s(G)$ over all primes $s \in \mathcal S$.

The above concepts are motivated by choices made in the proof of Proposition~\ref{prop:class}. It turns out that  for simple classical groups $T$, apart from the characteristic $p$, the set $\mathcal{S}(T)$ of primes $s$ we consider all have the property that the Sylow $s$-subgroups are cyclic and hence have order $\exp(T)_s$, and the  $s$-elements we consider are those of maximal order $\exp(T)_s$. 
Thus for a simple classical  group $T$ of characteristic $p$, we prove that $|T|$ is bounded above by a function of 
\[
m'(T)=\max\{m_p(T), m_{\mathcal{S}(T)\text{-exp}}(T)\}
\]
where $m_p(T)$ is as in \eqref{mT}. See Remark~\ref{rem: bounding mexpt for classical} for further details.
 This motivated us to consider whether a similar bound holds for other simple groups. In Remark~\ref{rem:mexp-alt} we show, using the prime number theorem, that for large enough $m$, $|\Alt(m)|$ is bounded above by a function of $m_\text{exp}(\Alt(m))$. 
 We believe this style of bound has not previously been studied. It would be interesting to know if such a bound holds also for the simple exceptional  groups of Lie type. 
%
 %
\end{remark}




\subsection{Acknowledgements}
The third author thanks Laci Babai for early discussions about an approach to proving the bound in Theorem~\ref{t:main} for rank one Lie type groups. 
 This research was supported by the Australian Research Council Discovery Project grant DP230101268.  
We thank the referee for their comments on our manuscript.


%
%
%
%
%
%
%
%
%
%

\section{Preparatory lemmas}
\label{s:prod}

For a prime $s$ and integer $n$, let $n_s$ denote the $s$-part of $n$, that is,
the highest power of $s$ dividing~$n$ and let $n_{s'}$ denoted the part of $n$ prime to $s$, that is,   that is, $n/n_s$.  For an integer $m$ define $\mathcal{S}(m)$ to
be the set of all prime divisors $s$ of $q^m-1$ such that $(q^m-1)_s$ does not
divide $q^t-1$ for any $t<m$.  In our analysis we need that
\[
\prodS(m,q):=\prod_{s\in\mathcal{S}(m)}(q^m-1)_s
\]
 is large enough, for example, larger than $q$ or
some constant multiple of $q$.  We use the following
lemma to treat the  classical groups.  

\begin{lemma}\label{lem:ppd} 
Let $q$ be a prime power and $m$ a positive integer. 
\begin{description}
\item[(i)]Suppose that $s$ is a prime divisor of $q^m-1$. If $t$ is the 
smallest positive integer such that $(q^m-1)_s$ divides $q^t-1$, then $t$
divides $m$.
\item[(ii)] $q+1$ divides $\prodS(2,q)$, and $q^2+1$ divides $\prodS(4,q)$.

\item[(iii)] If $m$ is odd and $m>1$, then
$2\not\in\mathcal{S}(2m)$ and $\prodS(2m,q)$ divides 
$q^m+1$.

\item[(iv)] Suppose that $m$ is an odd prime. Then 
\begin{enumerate}
    \item[(1)]  if $s\in\calS(m)$ and $s\mid q-1$, then $s=m$; if $s\in\calS(2m)$ and $s\mid q^2-1$, then $s=m$ and $m\mid q+1$;
    \item[(2)]    $(q^m-1)/(q-1)$ divides  $\prodS(m,q)$; and 
    \item[(3)]  $(q^m+1)/(q+1)$ divides $\prodS(2m,q)$.    
\end{enumerate}

\end{description}
\end{lemma}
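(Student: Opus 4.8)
The plan is to base everything on two elementary facts: the identity $\gcd(q^a-1,q^b-1)=q^{\gcd(a,b)}-1$, and the Lifting-the-Exponent lemma, which for an odd prime $s\mid q-1$ gives $v_s(q^n-1)=v_s(q-1)+v_s(n)$, together with its companion $v_s(q^n+1)=v_s(q+1)+v_s(n)$ for odd $n$ when $s\mid q+1$, and the special fact that $v_2(q^n-1)=v_2(q-1)$ whenever $q$ is odd and $n$ is odd. It is convenient to record a reformulation up front: writing $t_0(s)$ for the least $t$ with $(q^m-1)_s\mid q^t-1$, part (i) says $t_0(s)\mid m$, and hence $s\in\mathcal{S}(m)$ is \emph{equivalent} to $t_0(s)=m$. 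For (i) itself I would set $d=\gcd(t,m)$ and observe that $(q^m-1)_s$ divides both $q^m-1$ and $q^t-1$, hence divides their gcd $q^d-1$; minimality of $t$ gives $t\le d$, and $d\mid t$ gives $t=d\mid m$.

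For the divisibility statements (ii)--(iv) the uniform strategy is to compare $s$-adic valuations prime by prime: to show $A\mid\prodS(\cdot,q)$ it suffices to check $v_s(A)\le v_s(\prodS(\cdot,q))$ for every prime $s$, where the right-hand side equals $v_s(q^{(\cdot)}-1)$ exactly when $s\in\mathcal{S}(\cdot)$ and is $0$ otherwise. For (ii), every odd $s\mid q+1$ has $s\nmid q-1$, so $(q^2-1)_s=(q+1)_s$ and $t_0(s)=2$, i.e. $s\in\mathcal{S}(2)$; for $s=2$ with $q$ odd one checks $(q^2-1)_2>(q-1)_2$, so $2\in\mathcal{S}(2)$ as well, and the contributions multiply to give $q+1$. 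The claim $q^2+1\mid\prodS(4,q)$ is analogous, using that an odd $s\mid q^2+1$ has multiplicative order $4$ and hence lies in $\mathcal{S}(4)$, while the case $s=2$ is handled by the same kind of $2$-adic computation.

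Part (iii) splits into two claims. For $2\notin\mathcal{S}(2m)$, the computation $v_2(q^{2m}-1)=v_2(q^2-1)$ (valid since $m$ is odd) shows $(q^{2m}-1)_2\mid q^2-1$ with $2<2m$, so $t_0(2)\ne 2m$. For $\prodS(2m,q)\mid q^m+1$, each $s\in\mathcal{S}(2m)$ is odd with $(q^{2m}-1)_s\nmid q^m-1$; factoring $q^{2m}-1=(q^m-1)(q^m+1)$ and using that an odd $s$ divides at most one of $q^m\pm1$ forces $(q^{2m}-1)_s=(q^m+1)_s$, and these divide $q^m+1$. Part (iv) is where the hypothesis that $m$ is an odd prime and LTE do the real work: for (1) I use LTE to compute $v_s(q^m-1)$ and $v_s(q^{2m}-1)$ and read off that the only prime lying in the relevant set while dividing $q-1$ (respectively $q^2-1$) is $m$ itself, with the extra conclusion $m\mid q+1$ in the second case; for (2) and (3) I again compare valuations, using $v_s\big(\tfrac{q^m-1}{q-1}\big)=v_s(q^m-1)-v_s(q-1)$ together with (1) and (i) to show that every prime dividing the quotient lies in $\mathcal{S}(m)$ (respectively $\mathcal{S}(2m)$) and contributes at least the required valuation.

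The main obstacle I anticipate is not any single deep step but the careful, uniform treatment of the prime $2$, which behaves differently from odd primes in every valuation computation: it must be shown to drop out of $\mathcal{S}(2m)$ in (iii) and to contribute nothing to the quotients $\tfrac{q^m-1}{q-1}$ and $\tfrac{q^m+1}{q+1}$ in (iv), and it requires separate bookkeeping in (ii). Keeping the prime-by-prime valuation comparisons aligned, and correctly distinguishing the subcases $s\mid q-1$, $s\mid q+1$, and $s=m$ throughout (iv), is where the bulk of the care is needed.
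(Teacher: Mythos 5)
Your proposal is correct, and it shares the paper's skeleton: part (i) via $\gcd(q^a-1,q^b-1)=q^{\gcd(a,b)}-1$ and minimality of $t$, followed by an analysis of which primes lie in the sets $\calS(\cdot)$, with the prime $2$ treated separately. The technical route differs in two places. First, for the divisibility claims (ii), (iv)(2), (iv)(3), the paper argues globally rather than prime by prime: by (i) and the primality of $m$, every prime $s\notin\calS(m)$ has $(q^m-1)_s$ dividing $q-1$, whence $q^m-1$ divides $(q-1)\cdot\prodS(m,q)$ and one cancels; for (iv)(3) it shows $q^{2m}-1$ divides $\prodS(2m,q)\cdot(q^m-1)(q+1)$ and cancels $(q^m-1)(q+1)$. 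Your valuation-by-valuation comparison is equivalent but obliges you to identify exactly which primes divide the quotients; in particular, in (iv)(3) you need the converse statement that $m\in\calS(2m)$ whenever $m\mid q+1$, so that $\prodS(2m,q)$ actually picks up the factor $m$ of $(q^m+1)/(q+1)$. Your sketch only gestures at this (``distinguishing subcases''), but it is settled by the LTE computations $v_m(q^{2m}-1)=v_m(q^2-1)+1$, together with $v_m(q^t-1)=v_m(q^2-1)+v_m(t/2)$ for even $t<2m$ and $v_m(q^t-1)=0$ for odd $t$, so this is a fillable gap rather than an error. Second, for (iv)(1) the paper avoids lifting-the-exponent entirely, using the congruence $(q^m-1)/(q-1)\equiv m\pmod{q-1}$, i.e.\ $\gcd((q^m-1)/(q-1),q-1)=\gcd(m,q-1)$, its $2m$-analogue, and $\gcd(q^m+1,q^2-1)=q+1$ via part (iii); your LTE computation is a genuine alternative, and it also lets you derive the $2$-adic fact in (iii) directly instead of citing \cite[Lemma 2.5]{GueP}. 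The trade-off: the paper's gcd-and-cancel arguments keep the bookkeeping minimal, while your approach yields exact valuations and a uniform, self-contained toolkit at the cost of more sub-case analysis.
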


\begin{proof}
(i) Let $s^b=(q^m-1)_s$ and let $t$ be the least positive integer such that
$s^b$ divides $q^t-1$. Then $t\leqslant m$. Write $m=kt+r$, where $1\leqslant r\leqslant t$.
Then $s^b$ divides $\gcd(q^m-1, q^t-1) = q^{\gcd(m,t)}-1$, and $\gcd(m,t)
=\gcd(r,t)\leqslant r\leqslant t$. By the minimality of $t$ we have $r=t$, and hence 
$t$ divides $m$.

(ii) If a prime $s\mid q^2-1$ then either $s\in\mathcal{S}(2)$ or 
$(q^2-1)_s$ divides $q-1$, and hence $q^2-1$ divides $(q-1)\cdot \prodS(2,q)$, so  $\prodS(2,q)$ is divisible by $q+1$. Similarly, if a prime $s\mid q^4-1$ then, by part (i), either $s\in\mathcal{S}(4)$ or $(q^4-1)_s$ divides $q^2-1$, and 
hence $q^2+1$ divides $\prodS(4,q)$.

(iii) Next assume that $m$ is odd and $m>1$. If $q$ is even then $2\not\in\mathcal{S}(2m)$ by the definition of $\mathcal{S}(2m)$. It turns out that this also holds if $q$ is odd: for by \cite[Lemma 2.5]{GueP}, 
\[
(q^{2m}-1)_2 = ((q^2)^m-1)_2 = (q^2-1)_2
\]
and as $m>1$, again $2\not\in\mathcal{S}(2m)$ by the definition of $\mathcal{S}(2m)$. Thus, for any $q$, $\mathcal{S}(2m)$ consists of odd primes. Since $\gcd(q^m-1,q^m+1)=(2,q-1)$, it follows that for any odd prime $s$, $(q^{2m}-1)_s$ divides exactly one of $q^m+1$ and $q^m-1$.  Moreover, if $s\in\mathcal{S}(2m)$ then $(q^{2m}-1)_s$ does not divide $q^m-1$ by definition, and hence, for each $s\in\mathcal{S}(2m)$, we have that  $(q^{2m}-1)_s$ divides $q^m+1$. 
Hence $\prodS(2m,q)$ divides $q^m+1$, proving (iii).

(iv) Now assume that $m$ is an odd prime and let $s$ be a prime dividing $q^m-1$. Then either $s\in\mathcal{S}(m)$ or $(q^m-1)_s$ divides 
$q-1$, and hence $q^m-1$ divides $(q-1)\cdot \prodS(m,q)$, so  $\prodS(m,q)$ is divisible by $(q^m-1)/(q-1)$. Further, if $s\in\calS(m)$ and $s\mid q-1$, then $s$ must divide $(q^m-1)/(q-1)$ by the definition of $\calS(m)$, and hence $s$ divides $\gcd((q^m-1)/(q-1), q-1) = \gcd (m,q-1)$. Since $m$ is prime this implies that $s=m$. 
Similarly if $s\in\calS(2m)$ and $s\mid q^2-1$, then $s$ must divide $(q^{2m}-1)/(q^2-1)$ by the definition of $\calS(2m)$, and hence $s$ divides $\gcd((q^{2m}-1)/(q^2-1), q^2-1) = \gcd (m,q^2-1)$. Since $m$ is prime this implies that $s=m$ divides $q^2-1$. Further, by part (iii), $s\mid q^m+1$ and hence $s$ divides $\gcd(q^m+1, q^2-1) = q+1$. Thus parts (1) and (2) are proved.

Finally, if a prime $s$ divides $q^{2m}-1$ and $t$ is 
minimal such that $(q^{2m}-1)_s$ divides $q^t-1$, then either $s\in\mathcal{S}(2m)$, 
or $t\in\{m,2,1\}$. Thus $q^{2m}-1$ divides 
\[
\prodS(2m,q)\cdot \frac{(q^m-1)(q^2-1)}{\gcd(q^m-1,q^2-1)} =  \prodS(2m,q)\cdot (q^m-1)(q+1).
\]
It follows that $(q^m+1)/(q+1)$ divides $\prodS(2m,q)$.
\end{proof}

    


\begin{defn}
    We say that a group $G$ is \emph{prime power bounded by $n$}, or simply, \emph{pp-bounded by $n$}, if $m(G) \leqslant n$, where $m(G)$ is as in~\eqref{mT}.
   In other words, for each prime $p$ dividing $|G|$, the number of $\Aut(G)$-classes of elements of $p$-power order in $G$ is at most $n$.
\end{defn} 

In the following lemma $\phi$ denotes the Euler $\phi$-function, namely for a positive integer $m$, $\phi(m)$ is the number of positive integers at most $m$ and coprime to $m$.

\begin{lem}
\label{lem: bounding phi s}
Suppose that $G$ is a group  that is pp-bounded by $n$. Let $S \leqslant G$ be a nontrivial cyclic $s$-subgroup   of order $s^b$, where $s$ is prime,  and let $N=N_{\Aut(G)}(S)$. 
Then there is a bijection $\mathcal{C} \rightarrow \mathcal{D}$, where
\[
\mathcal C  := \{ x^{\Aut(G)} \mid x\in S, o(x) = s^b \},\quad  
 \mathcal D:=\{ x^N  \mid x\in S, o(x) = s^b \},
\]
and  $|\mathcal C |=\phi(|S|)/r $, with $r=|N_{\Aut(G)}(S) : C_{\Aut(G)}(S)|$. Furthermore, 
\[
\phi(|S|)=s^{b-1}(s-1) \leqslant  rn \quad \text{and} \quad \phi(|S|) \text{ divides }  r(n!).
\]
\end{lem}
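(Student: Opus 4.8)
The plan is to first identify the objects combinatorially, then establish the bijection via a single structural observation, and finally read off the numerical bounds directly from pp-boundedness.

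First I would note that the elements $x\in S$ with $o(x)=s^b$ are precisely the generators of the cyclic group $S$, of which there are exactly $\phi(|S|)=\phi(s^b)=s^{b-1}(s-1)$. Since $N=N_{\Aut(G)}(S)$ stabilises $S$, restriction to $S$ gives a homomorphism $N\to\Aut(S)$ whose kernel is $C_{\Aut(G)}(S)$ (note that $C_{\Aut(G)}(S)\leqslant N$, since centralising $S$ forces normalising it). Hence $\overline{N}:=N/C_{\Aut(G)}(S)$ embeds in $\Aut(S)\cong(\mathbb{Z}/s^b\mathbb{Z})^\times$, a group of order $\phi(|S|)$, so $r=|\overline{N}|$ divides $\phi(|S|)$. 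Because $\Aut(S)$ acts regularly on the set of generators of $S$, its subgroup $\overline{N}$ acts semiregularly on that set, so every $\overline{N}$-orbit, equivalently every $N$-orbit, on generators has size exactly $r$. Therefore $|\mathcal D|=\phi(|S|)/r$.

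The crux is then the claim that, on generators of $S$, $\Aut(G)$-conjugacy coincides with $N$-conjugacy; this will simultaneously produce the bijection $\mathcal C\to\mathcal D$ and give $|\mathcal C|=\phi(|S|)/r$. One inclusion is trivial since $N\leqslant\Aut(G)$. For the other, suppose $x,y$ are generators of $S$ with $y=x^\sigma$ for some $\sigma\in\Aut(G)$. Then $\sigma$ maps $\langle x\rangle=S$ onto $\langle x^\sigma\rangle=\langle y\rangle=S$, because $y$ also generates $S$; hence $\sigma$ normalises $S$ and lies in $N$. This is the only genuinely structural step, and I expect it to be the main (though short) point of the argument: an automorphism of $G$ carrying one generator of $S$ to another must preserve the subgroup they generate. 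Consequently the assignment $x^N\mapsto x^{\Aut(G)}$ is a well-defined bijection $\mathcal D\to\mathcal C$, with surjectivity immediate from the definition of $\mathcal C$ and injectivity being exactly the claim, so $|\mathcal C|=|\mathcal D|=\phi(|S|)/r$.

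Finally I would extract the inequalities from pp-boundedness. Every class in $\mathcal C$ is an $\Aut(G)$-class of an element of $s$-power order, so $|\mathcal C|\leqslant m_s(G)\leqslant m(G)\leqslant n$. Combined with $|\mathcal C|=\phi(|S|)/r$ this yields $\phi(|S|)=s^{b-1}(s-1)\leqslant rn$. For the divisibility statement, set $k:=\phi(|S|)/r=|\mathcal C|$, which is a positive integer since $r$ divides $\phi(|S|)$, and satisfies $1\leqslant k\leqslant n$. Any positive integer at most $n$ divides $n!$, so $k\mid n!$, and hence $\phi(|S|)=rk$ divides $r(n!)$, completing the proof.
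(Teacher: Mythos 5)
Your proposal is correct and takes essentially the same route as the paper: the key structural step---that an automorphism of $G$ carrying one generator of $S$ to another must normalise $S$---is exactly the paper's argument, and the numerical conclusions ($\phi(|S|)\leqslant rn$ and $\phi(|S|)\mid r(n!)$) are extracted from pp-boundedness in the same way. The only cosmetic difference is that you obtain the uniform orbit size $r$ via semiregularity of $N/C_{\Aut(G)}(S)$ acting through $\Aut(S)$ on the generators, whereas the paper uses orbit--stabiliser together with $C_N(x)=C_N(S)$; these are equivalent observations.
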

\begin{proof}
 Note that if $x_1,x_2\in S$ have order $s^b$ and are such that $x_1^g = x_2$ for some $g\in \Aut(G)$, then $S^g = \langle x_1\rangle^g = \langle x_1^g \rangle = \langle x_2 \rangle = S$, so that $g\in N$. Conversely, if $x_1, x_2 \in S$ are conjugate by an element of $N$, then they are also conjugate under the action of $\Aut(G)$. 
This gives a bijection $\mathcal{C} \rightarrow \mathcal{D}$ as claimed.

For $x^N \in \mathcal D$, we know $\langle x \rangle = S$, so  $|x^N | = |N:C_N(x)| = |N:C_N(S)|$. Note that $C_{\Aut(G)}(S) \leqslant N_{\Aut(G)}(S)=N$ so that $C_{\Aut(G)}(S)=C_N(S)$. Hence $|N:C_N(S)| =r $. Thus each $N$-class in $\mathcal D$ has length exactly $r$.
Since  $S$ contains $\phi(|S|)$ elements of order $|S|$, where  $\phi$ is Euler's function, we have that $r$ divides $\phi(|S|)$ and $|\mathcal D| = \phi(|S|)/r$. Since $G$ is pp-bounded by $n$,
$ \phi(|S|)/r = |\mathcal D | \leqslant n$, 
and hence in particular $\phi(|S|)$ divides $r(n!)$.
\end{proof}

\begin{notn}
Throughout this  paper $\log(x)$ denotes the   natural logarithm of $x$.
\end{notn}

\begin{lemma}
    \label{lem: sqrtx over log x}
    For all $a\geqslant 1$ we have 
    \[
    \frac{a^{1/4}}{4} < \frac{\sqrt{a}}{ \log(a)}.
    \]
\end{lemma}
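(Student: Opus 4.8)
The plan is to reduce the stated inequality to the elementary fact that $\log b < b$ for all $b > 0$. First I would clear denominators. For $a > 1$ we have $\log a > 0$, so multiplying both sides of the desired inequality by the positive quantity $\log(a)/a^{1/4}$ turns it into the equivalent form
\[
\log a < 4\,a^{1/4}.
\]
It therefore suffices to prove this last inequality for all $a \geq 1$: the original claim then follows for every $a > 1$, and for $a = 1$ we have $\log a = 0$, so the right-hand side $\sqrt{a}/\log(a)$ is to be read as $+\infty$ and the inequality holds trivially.

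To establish $\log a < 4\,a^{1/4}$ on $a \geq 1$, I would substitute $b = a^{1/4}$, so that $b \geq 1$ and $\log a = 4\log b$. The inequality becomes $4\log b < 4b$, that is, $\log b < b$. This is valid for every $b > 0$, since $\log b \leq b - 1 < b$, where the first bound is the standard consequence of the concavity of $\log$ (equivalently of $e^{b-1} \geq b$). Substituting $b = a^{1/4}$ back recovers the reformulated inequality, and hence the statement.

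I do not anticipate any genuine obstacle; the only subtlety is the degenerate point $a = 1$, where $\log a = 0$ makes the right-hand side of the original inequality undefined as a finite quantity. Passing to the reformulation $\log a < 4\,a^{1/4}$ avoids this entirely, since that inequality reads $0 < 4$ at $a = 1$, and the division step used to move between the two forms is justified precisely on the range $a > 1$ on which $\log a$ is positive.
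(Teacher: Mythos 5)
Your proof is correct, and after the initial reduction --- which you and the paper perform identically, clearing denominators to reach the equivalent inequality $\log a < 4a^{1/4}$ --- your argument genuinely diverges from the paper's. The paper exponentiates to $a < e^{4a^{1/4}}$, sets $f(a) = e^{4a^{1/4}} - a$, and runs a calculus argument: it computes $f'$ and $f''$, shows $f''>0$ on $[1,\infty)$ so that $f'$ is increasing, checks $f'(1) = e^4-1>0$ and $f(1)=e^4-1>0$, and concludes $f>0$. You instead substitute $b = a^{1/4}$, under which the constant $4$ cancels exactly ($\log a = 4\log b$ against $4a^{1/4} = 4b$), reducing everything to the tangent-line inequality $\log b \leqslant b-1 < b$ for $b>0$. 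Your route is shorter and more elementary --- no differentiation beyond citing a standard fact --- and it generalizes immediately to $\log a < c\,a^{1/c}$ for any $c>0$ by the same substitution, making transparent why the exponent $1/4$ and the constant $4$ fit together; the paper's route is a self-contained but heavier direct verification. You are also more careful than the paper about the degenerate point $a=1$, where the right-hand side of the stated inequality is literally undefined since $\log 1 = 0$: both proofs in effect establish only the reformulated inequality, which is unproblematic there, and your explicit remark on this is a small improvement in rigour.
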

\begin{proof}
The inequality in the statement is equivalent to $\log(a) < 4  a^{1/4}$. Exponentiating both sides, we see the latter inequality is equivalent to $a < e^{4  a^{1/4}}$, that is, 
\[
f(a) := e^{  4  a^{1/4}} - a > 0.
\]
Now  $f'(a) = \frac{e^{ 4 a^{1/4}}}{a^{3/4}} -1$ and  $f''(a) = \frac{e^{ 4 a^{1/4}}}{a^{3/2}}(1-\frac{3}{4a^{1/4}})$. For $a\geqslant 1 $ we have  $f''(a) >0$, so that $f'(a)$ is increasing for $a\geqslant 1$. Further, $f'(1) = e^{4} - 1 >0$, so $f'(a)>0$ for all $a\geqslant 1$. Thus $f(a)$ is increasing for $a\geqslant 1$, and since $f(1) = e^4-1>0$ also, we have $f(a) > 0$ for all $a \geqslant 1$, as required.
\end{proof}

\begin{lemma}
\label{lem: log bound}
    Suppose that $q=p^a$, $p\geqslant   2$ and $a \geqslant 1$. Then
    \[ \frac{\log(2) \sqrt{q}}{2} \leqslant \frac{q}{a}.\]
\end{lemma}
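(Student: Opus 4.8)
The plan is to reduce the claimed inequality to the elementary fact that $e^x \geqslant 1+x$ for all real $x$. First I would clear denominators and divide through by the positive quantity $\sqrt{q}$, rewriting the desired inequality $\frac{\log(2)\sqrt q}{2} \leqslant \frac{q}{a}$ in the equivalent form
\[
\frac{\log(2)}{2}\,a \leqslant \sqrt{q}.
\]
Since $q = p^a > 0$ and $a\geqslant 1$, all these manipulations are reversible, so it suffices to prove this reformulated bound.

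Next I would exploit the hypothesis $p \geqslant 2$: because $q = p^a$, we have $\sqrt{q} = p^{a/2} \geqslant 2^{a/2}$, and hence it is enough to establish the stronger-looking but $p$-free inequality $\frac{\log(2)}{2}\,a \leqslant 2^{a/2}$. The key observation is that the right-hand side is itself an exponential in the left-hand side: setting $x := \frac{\log(2)}{2}\,a$, we have $2^{a/2} = e^{(\log 2)\,a/2} = e^{x}$, so the inequality collapses to $x \leqslant e^{x}$. This holds for every real $x$ (indeed $e^{x} \geqslant 1+x > x$), and in particular for $x = \frac{\log(2)}{2}\,a$, which completes the argument.

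There is essentially no obstacle here. Unlike the calculus-based proof of Lemma~\ref{lem: sqrtx over log x}, no monotonicity computation is required once one spots the substitution $x = \frac{\log(2)}{2}\,a$, which turns $2^{a/2}$ into $e^{x}$ and reduces everything to the universal bound $e^{x}\geqslant x$. I would simply note that the constant $\log(2)/2$ appearing in the statement is precisely what makes this substitution exact, and that the hypothesis $a\geqslant 1$ is in fact not needed for the final step (the bound $x\leqslant e^x$ being valid for all $x$); only $p\geqslant 2$ is used, in passing from $\sqrt q$ to $2^{a/2}$.
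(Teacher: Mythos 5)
Your proof is correct and takes essentially the same route as the paper's: the paper bounds $a \leqslant \log(q)/\log(2)$ and invokes the standard inequality $\log(x) \leqslant 2\sqrt{x}$, which under the substitution $x=y^2$ is precisely your universal bound $y \leqslant e^{y}$. The differences are purely organisational — you rearrange first and reduce to the case $p=2$, while the paper applies the logarithmic bound directly to $q$ — but both arguments use $p\geqslant 2$ once and then rest on the same elementary exponential estimate.
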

\begin{proof}
    Note that $a  = \frac{\log(q)}{\log(p)} \leqslant \frac{\log(q)}{\log(2)}$. Now since $\log(x) \leqslant 2\sqrt{x}$ for all $x\in \mathbb R$ with $x>0$, we have $a \leqslant \frac{2q^{1/2}}{\log(2)}$, and thus $a q^{1/2} \leqslant \frac{2q}{\log(2)}$ which yields $\frac{\log(2)q^{1/2}}{2} \leqslant \frac{q}{a}$.
\end{proof}

\section{The alternating groups}
\label{sec:alt}

We first consider the alternating groups.

\begin{lemma}
  \label{lem: alt}  
  If $T\cong \Alt(m)$ is $pp$-bounded by $n$, then $|T| \leqslant \frac{1}{2}(3n+2)!$.
\end{lemma}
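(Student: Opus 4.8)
The plan is to exhibit, for a well-chosen prime $p$, a large family of pairwise non-$\Aut(T)$-conjugate $p$-elements of $T$, and then to invert the inequality $m_p(T)\le m(T)\le n$ to bound $m$, and hence $|T|=\tfrac12 m!$, from above. The natural prime is $p=3$: a product of disjoint $3$-cycles is an even permutation and so already lies in $\Alt(m)$, which gives a clean one-parameter family of $3$-elements without any parity bookkeeping.

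First I would record the relevant class structure. For a simple alternating group we have $m\ge 5$, and for $m\ne 6$ it is standard that $\Aut(T)\cong\Sym(m)$ acting on $T=\Alt(m)$ by conjugation. Consequently two elements of $T$ are $\Aut(T)$-conjugate exactly when they have the same cycle type, and the $\Aut(T)$-classes of $p$-elements correspond bijectively to the partitions of $m$ into parts that are powers of $p$ and that represent even permutations. This reduces counting $\Aut(T)$-classes to counting suitable cycle types.

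Next, for $p=3$, consider for each $j$ with $0\le j\le \lfloor m/3\rfloor$ an element $x_j\in T$ of cycle type $3^{\,j}\,1^{\,m-3j}$. Each $x_j$ is a product of $j$ disjoint $3$-cycles, hence an even $3$-element, and distinct values of $j$ give distinct cycle types and therefore (for $m\ne 6$) distinct $\Aut(T)$-classes. This yields $\lfloor m/3\rfloor+1$ distinct $\Aut(T)$-classes of $3$-elements, so $m_3(T)\ge \lfloor m/3\rfloor+1$. Since $T$ is pp-bounded by $n$ we have $m_3(T)\le m(T)\le n$, whence $\lfloor m/3\rfloor\le n-1$ and therefore $m\le 3n-1\le 3n+2$. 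As $t\mapsto t!$ is increasing, this gives $|T|=\tfrac12\,m!\le \tfrac12\,(3n+2)!$, as required.

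The only genuine obstacle is the exceptional case $m=6$, where $\Aut(\Alt(6))\ne\Sym(6)$ and the extra automorphism fuses the class of $3$-cycles with the class of $3^2$-elements; the correspondence between cycle types and $\Aut(T)$-classes then fails, and the count $m_3(T)\ge\lfloor m/3\rfloor+1$ breaks down. I would dispose of this single value directly: here $|T|=360$ is small, and a quick inspection of, say, the $2$-elements shows the hypothesis forces $n=m(T)$ large enough that $\tfrac12(3n+2)!\ge 360$ holds trivially. All other simple cases $m\ge 5$, $m\ne 6$, are covered by the argument above.
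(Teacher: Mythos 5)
Your proof is correct and follows essentially the same route as the paper's: for $m\neq 6$ both arguments count the pairwise non-$\Aut(T)$-conjugate classes of products of disjoint $3$-cycles to get $\lfloor m/3\rfloor\leqslant n$ (your inclusion of the identity as a $3$-element just sharpens this to $m\leqslant 3n-1$), and both dispose of $m=6$ by noting that the two classes of nonidentity $2$-elements force $n\geqslant 2$, so that $360\leqslant \tfrac{1}{2}(3n+2)!$.
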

\begin{proof}
    Suppose that $T\cong \Alt(m)$ is $pp$-bounded by $n$. If $m \neq 6$, we note that $T$ has $\lfloor m/3 \rfloor$ $\Aut(T)$-classes of elements of elements order $3$. This gives $m \leqslant 3n+2$, and hence $|T|=\frac{1}{2}\cdot m!\leqslant \frac{1}{2}\cdot (3n+2)!$. If $m=6$, then $T$ has $2$ classes of $2$-elements, so $n\geqslant 2$. Then certainly  $|T|=360 \leqslant \frac{1}{2}\cdot (3n+2)!$ holds.
\end{proof}

\begin{remark}
    \label{rem:mexp-alt}
 As discussed in Remark~\ref{rem:mexp}, another type of  bound on $|T|$ for a simple group $T$
is in terms of the number of $\Aut(T)$-classes of elements with order equal to  $\exp(T)_p$, the $p$-part of the exponent of $T$. We explain briefly that there is such a bound when $T\cong \Alt(m)$, for sufficiently large $m$. Recall that $\exp(\Alt(m)) =\mathrm{lcm}\{ m' : 1 \leqslant m' \leqslant m \}$.  

Let $\pi(x)$ denote  the number of   integers less than or equal to $x$  that  are prime. Based on the Prime Number Theorem, we know   (for  sufficiently large $x$) that
\begin{equation}
\label{eq:prime number theorem}
c \frac{ x}{\log x } < \pi(x) < d \frac{ x}{\log  x } 
\end{equation}
for explicit  constants $c$ and $d$; for example, if $x\geqslant 11$, then we can take $c=1$, and for $x$ `sufficiently large' we can take $d=1.04423$, see \cite[pp.~176--177]{R}. These bounds allow us to prove existence of primes in certain intervals, somewhat analogously to Bertrand's Postulate which asserts that there is a prime between $m$ and $2m$ for for all positive integers $m$. We are interested here in the existence of a prime $p$  such that $\sqrt{m/3} < p \leqslant \sqrt{m/2}$; and we claim that such a prime exists for sufficiently large $m$. 
%
Indeed, if $\sqrt{m/3}$ is large enough so that \eqref{eq:prime number theorem} holds with $c=1$, then we have 
\begin{center}
    $\pi\left(\sqrt{m/2}\right) >  \frac{\sqrt{m/2}}{\log\left(\sqrt{m/2}\right)}$ and $\pi\left(\sqrt{m/3}\right) < d \frac{\sqrt{m/3}}{\log\left(\sqrt{m/3}\right)}$,
\end{center}
and a sufficient condition for $p$ to exist is that
\[ 
\frac{\sqrt{m/2}}{\log\left(\sqrt{m/2}\right)} > d \frac{\sqrt{m/3}}{\log\left(\sqrt{m/3}\right)}
\]
which is equivalent to 
\[
\log(m) > \frac{\log(3)/\sqrt{2} - d \log(2)/\sqrt{3}}{1/\sqrt{2}-d/\sqrt{3}}
\]
and since $d$ is a constant, this holds for sufficiently large $m$.

Thus, provided $m$ is large enough we may choose a prime $p$ such that $\sqrt{m/3} < p \leqslant \sqrt{m/2}$, or equivalently,  $2p^2\leqslant m < 3p^2$. We may assume that $p>3$ (by taking $m>27$) and then we see that the $p$-part $\exp(T)_p$ of the exponent is exactly  $p^2$. Meanwhile, the inequality   $2 p^2 \leqslant m$ implies that   $T$ has at least  $p$ $\Aut(T)$-classes of elements of order $p^2$, namely, for  $1\leqslant i \leqslant p$, elements that have a single  $p^2$-cycle and exactly $i$ cycles of length $p$.  
Such elements with different values of $i$ cannot be conjugate in $\Aut(T)\cong \Sym(m)$ because they have different cycle  types, so there are at least $p$ $\Aut(T)$-classes of elements with exponent $\exp(T)_p$, that  is,  $m_{\text{exp}}(T)\geqslant  p$. Thus $m< 3p^2 \leqslant 3(m_{\text{exp}}(T))^2$, and hence $|T|$ is bounded above by $\frac{1}{2} (3 (m_{\text{exp}}(T))^2)!$. 
\end{remark}

\section{Finite simple classical groups}
\label{sec:class}

Let $T$ be a finite simple classical group    defined over a field of order  $q=p^a$, as in one of the lines of the following table. Table~\ref{tab: classical gps} records also the natural module $V$ for $T$ and its dimension, and the covering group $G$ of $T$ in $\GL(V)$. 

\begin{table}[ht]
\begin{center}
\begin{tabular}{cclc |c}
\hline
  $T$&$d$ &$V$ & $G$ & Conditions  \\ \hline
   $\PSL(d,q)$  & $d\geqslant  2$ & $\mathbb{F}_q^d$ & $\SL_d(q)$ & $(d,q)\ne(2,2)$ or $(2,3)$\\ 
   $\PSU(d,q)$  & $d\geqslant  3$ & $\mathbb{F}_{q^2}^d$ & $\SU_d(q)$ & $(d,q)\ne(3,2)$\\ 
    $\PSp(2d,q)$  & $d\geqslant  2$ & $\mathbb{F}_q^{2d}$ & $\Sp_{2d}(q)$ & $(d,q)\ne(2,2)$\\ 
   $\POmega^\circ(2d+1,q)$  & $d\geqslant  3$ & $\mathbb{F}_q^{2d+1}$ & $\Omega_{2d+1}^\circ(q)$ & $q$ odd\\  
      $\POmega^\epsilon(2d,q)$  & $d\geqslant  4$ & $\mathbb{F}_q^{2d}$ & $\Omega_{2d}^\epsilon(q)$ & $\epsilon\in\{+,-\}$\\ \hline
   \end{tabular}
   \caption{The simple classical groups, their  covering groups and natural modules}
   \label{tab: classical gps}
\end{center}
\end{table}

\begin{lem}\label{lem:unipbound}
Let $T$ and $d$ be  as in Table~\ref{tab: classical gps}. Then  the number of unipotent  conjugacy classes is at least $d$.
\end{lem}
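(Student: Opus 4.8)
The plan is to exhibit, for each family in Table~\ref{tab: classical gps}, at least $d$ pairwise non-conjugate unipotent elements in $T$ (equivalently in the covering group $G$, since unipotent classes in $G$ map faithfully to classes in $T$ up to the central quotient). The cleanest uniform device is to use Jordan block structure of unipotent elements. A unipotent element of $G \leqslant \GL(V)$ with $\dim V = N$ is conjugate in $\GL(V)$ to a single matrix determined by its partition of $N$ into Jordan block sizes, so distinct Jordan types give distinct $\GL(V)$-classes and hence distinct $G$-classes. The number of partitions of $N$ is far larger than $d$, so the real content is only to produce $d$ partitions that are (a) realisable by genuinely unipotent elements lying inside the relevant classical group $G$, and (b) still distinct after passing to the simple quotient $T = G/Z(G)$.

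First I would handle the linear and unitary cases, where $N = d$ and $G = \SL_d(q)$ or $\SU_d(q)$. Here I would take the elements with a single Jordan block of size $k$ and $d-k$ trivial blocks, for $k = 1, \dots, d$ (the $k=1$ case being the identity, which I may drop in favour of the $k=d$ regular unipotent, giving $d$ classes among $k \in \{2,\dots,d\}$ plus the identity, or equivalently the $d$ distinct Jordan types $1^d, 2\,1^{d-2}, \dots$). These all have determinant $1$, so lie in $\SL$; for $\SU$ one checks each Jordan type is achievable by a unipotent isometry of the Hermitian form, which is standard. For the symplectic and orthogonal cases $N = 2d$ or $2d+1$, the subtlety is that not every partition occurs — unipotent classes in $\Sp_N$ and $\Omega_N$ are constrained (in odd characteristic by the rule that odd parts have even multiplicity in the symplectic case and even parts have even multiplicity in the orthogonal case, with further even-characteristic complications). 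Nonetheless I only need $d$ of them, and a safe uniform choice is the elements built from a single Jordan block of size $2k$ (or $2k-1$) supported on a nondegenerate subspace with the complement fixed pointwise, for $k = 1, \dots, d$; these yield $d$ distinct partitions of $N$ and each is realisable inside the isometry group.

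The main obstacle I anticipate is bookkeeping in even characteristic, where the classification of unipotent classes in $\Sp_N(q)$ and $\Omega_N^\epsilon(q)$ is genuinely more delicate — Jordan type alone no longer separates classes, and the correspondence between classes in $G$ and in $T$ can involve the centre and the full similarity/conformal group. This is presumably exactly where the authors invoke the Aschbacher--Seitz description of involutions in even characteristic mentioned in the introduction. I would therefore split the argument by characteristic: in odd characteristic rely on the Jordan-type classification directly, and in even characteristic fall back on counting involution classes (unipotent elements of order $2$), which Aschbacher--Seitz guarantee are numerous enough. A final routine check is that the claimed elements remain non-conjugate in $T$ rather than merely in $G$: since $|Z(G)|$ is small and central elements scale Jordan blocks uniformly, the distinct Jordan/involution types I have chosen cannot be identified under the quotient, so the count of at least $d$ classes survives.
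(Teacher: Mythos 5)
Your overall route is the same as the paper's: exhibit unipotent elements with distinct Jordan types, and fall back on the Aschbacher--Seitz classification of involutions \cite{aschseitz} in even characteristic. Your linear, unitary and symplectic choices are fine (the paper uses $i$ copies of $J_2$ in the symplectic case rather than a single $J_{2k}$; either works, and Jordan types suffice there in both characteristics). But there are two genuine gaps. The first is the even-characteristic orthogonal case, which you dismiss with the claim that Aschbacher--Seitz guarantee the involution classes are ``numerous enough''. This is the crux of the whole lemma, and the count has \emph{zero} slack. In $\Omega^\epsilon_{2d}(q)$ with $q$ even, only involutions with an even number $i$ of $J_2$-blocks lie in $\Omega$ (the type $b_i$ involutions, $i$ odd, have nontrivial Dickson invariant), giving two $T$-classes ($a_i$ and $c_i$) for each even $i<d$, and one or three classes for $i=d$ when $d$ is even, according as $\epsilon=-$ or $\epsilon=+$. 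When $d$ is odd, or when $\epsilon=-$, this totals exactly $d-1$ nonidentity classes, so the bound $d$ is reached only by counting the identity; and for $\POmega^+_8(q)$ one must additionally know how triality fuses these classes, for which the paper invokes \cite[Proposition 3.55]{Burness}. So ``numerous enough'' is true only after exactly the delicate bookkeeping your proposal hoped to avoid, and an unexamined appeal to it is not a proof.

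The second gap is that your hedge ``a single Jordan block of size $2k$ (or $2k-1$)'' must be resolved, and by the parity rules you yourself state only the odd choice is available in the orthogonal cases: $(2k,1^{N-2k})$ has an even part of multiplicity one and so is not an orthogonal partition, whereas $(2k-1,1^{N-2k+1})$ for $k=1,\dots,d$ does occur (note this forces you to count the identity, $k=1$, among your $d$ classes, since the block sizes $1,3,\dots,2d-1$ are all that fit). Finally, the paper proves more than non-conjugacy in $T$: it checks that the chosen classes are not fused under $\Aut(T)$, which is what Proposition~\ref{prop:class} actually uses, since it bounds $d$ by the number of $\Aut(T)$-classes of $p$-elements. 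For Jordan-type arguments this is nearly free---field, diagonal and inverse-transpose automorphisms preserve Jordan type---but it is genuinely not free for $\POmega^+_8(q)$, where triality does not preserve Jordan type; your proposal never addresses automorphism fusion at all.
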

\begin{proof}
Note that the identity element forms a unipotent conjugacy class, and since there is always a non-identity unipotent conjugacy class, the assertion holds if $d=2$. Thus we may assume that $d\geqslant 3$. Furthermore, making use of the identity, we need to show the existence of $d-1$ nonidentity unipotent classes.
The unipotent conjugacy classes in classical groups are well known, see for example \cite{BGbook,DLO,lieseitzunipot,wall}, or \cite{aschseitz} for unipotent elements of order 2.  

Let $J_i$ denote an $i\times i$ Jordan block with all 1's down the diagonal. Then for each $i$ such that $2\leqslant i\leqslant d$,  the group $T=\PSL(d,q)$ contains an element corresponding to the matrix that is the direct sum of $J_i$ and $I_{d-i}$. These are clearly not conjugate under any element of $\mathrm{P}\Gamma\mathrm{L}(d,q)$ or under the inverse transpose map, so the number of $\Aut(T)$-conjugacy classes of nonidentity unipotent elements is at least $d-1$. Each of these conjugacy classes meets $\PSU(d,q)$ nontrivially and so for $T=\PSU(d,q)$ the number of $\Aut(T)$-conjugacy classes of nonidentity unipotent elements is also at least $d-1$.


Suppose now that $T= \PSp(2d,q)$ (with  $d\geqslant 3$).  For each $i\leqslant d$, the group $T$ contains an element whose corresponding matrix has Jordan canonical form that is the direct sum of $i$ copies of $J_2$ and $I_{2(d-i)}$. This gives us $d$ different conjugacy classes in $T$, that are clearly not fused in $\mathrm{P}\Gamma\mathrm{Sp}(2d,q)$. 
Since   $2d >4$ we have  $\Aut(T)=\mathrm{P}\Gamma\mathrm{Sp}(2d,q)$ and the result is proved.

Next suppose that $T=\POmega^\circ(2d+1,q)$ and $q$ is odd. For each even  $i$ with $1\leqslant i \leqslant 2d+1$, there  are unipotent elements  whose corresponding matrix has Jordan canonical form being  the direct sum of $J_i$  and $I_{2d+1-i}$. This gives $d$ distinct classes that are clearly not fused in $\mathrm{P}\Gamma\mathrm{O}(2d+1,q)=\Aut(T)$, as required.

Finally, we have  $T=\POmega^\epsilon(2d,q)$ with $\epsilon\in\{\pm\}$. First suppose that $q$ is odd. For each  $i$ with $1\leqslant i \leqslant d$, there are unipotent elements in $T$ whose corresponding matrix  has Jordan canonical form being  the direct sum of $J_{2i}$ and $I_{2(d-i)}$. This gives $d$ distinct classes that are clearly not fused under inner, diagonal or field automorphisms of $T$.
Moreover, when $2d=8$ and $\epsilon=+$, \cite[Proposition 3.55]{Burness} shows that none of these classes are fused under a triality automorphism. Hence $T$ has at least $d$ $\Aut(T)$-classes of unipotent elements.
Suppose now that  $q$ is even. We again follow  the notation of \cite[Section 8]{aschseitz} for involutions in $T$ and use the geometric description in \cite[Section 3.5.4]{BGbook}.  For each even $i<d$ we get two $T$-classes of involutions with Jordan canonical form consisting of $i$ copies of $J_2$. When $i=d$ and $d$ is even, we get one such $T$-class when $\epsilon=-$ and three when $\epsilon=+$.  Fusing of these classes in $\mathrm{P}\Gamma\mathrm{O}^\pm(2d,q)$  occurs only when $\epsilon=+$ and $i=d$ is even, in which case two of the three classes fuse. When $d$ is odd we have  $\Aut(T)=\mathrm{P}\Gamma\mathrm{O}^\pm(2d,q)$ and so we see that $T$ has at least $d-1$ $\Aut(T)$-classes of involutions as required. Suppose now that $d$ is even. Then the number of  $T$-classes of involutions is   $2(\lfloor (d-1)/2 \rfloor) +3 =  d+1 $ when $\epsilon=+$ and $2(\lfloor (d-1)/2 \rfloor) +1=d$ when $\epsilon=-$. Thus if $\epsilon=-$, or if  $\epsilon=+$ and  $2d\neq 8$, we see  that  $T$ has at least  $d$ $\Aut(T)$-classes of involutions. Finally, if $2d=8$ and $\epsilon=+$, then two  of the classes with $i=4$ (denoted $a_4$ and $a_4'$ in \cite{BGbook}) are fused with one of the classes with $i=2$ (denoted $c_2$ in \cite{BGbook}) under triality \cite[Proposition 3.55]{Burness}. Hence in this final case we also get at least $d-1$  $\Aut(T)$-classes of involutions, as required.
%
\end{proof}

\begin{prop}\label{prop:class}
There is an increasing integer function 
$g$ such that, if $T$ is a finite simple classical group  as in one of the lines of Table~\ref{tab: classical gps}, and $T$ is pp-bounded by $n$, then $|T|<g(n)$.
\end{prop}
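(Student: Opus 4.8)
The plan is to bound $|T|$ in terms of $n$ by controlling the two natural ingredients of $|T|$ for a classical group of characteristic $p$ over $\mathbb{F}_q$ with natural dimension $d$: the field size $q$ (equivalently the prime power $p^a$) and the dimension $d$. Since $|T|$ is a polynomial in $q$ of degree roughly $d^2$ times a bounded product of cyclotomic factors, it suffices to bound both $d$ and $q$ above by functions of $n$. I would obtain the bound on $d$ from unipotent classes and the bound on $q$ from semisimple elements lying in suitable cyclic tori.

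\textbf{Step 1: bounding the dimension $d$.} Lemma~\ref{lem:unipbound} guarantees at least $d$ unipotent conjugacy classes, and since unipotent elements are exactly the $p$-elements, these give at least $d$ $\Aut(T)$-classes of $p$-power-order elements. As $T$ is pp-bounded by $n$, we immediately get $d \leqslant n$. This disposes of the dimension entirely and is the easy half.

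\textbf{Step 2: bounding the field size $q$.} Here I would exploit the preparatory Lemma~\ref{lem: bounding phi s} together with the number-theoretic Lemma~\ref{lem:ppd}. The idea is to locate a cyclic torus $S$ inside $G$ whose order is a product of the form $\prodS(m,q)$ for a well-chosen $m$ (for instance $m=d$ or a suitable odd prime near $d$, or $m=2,4$ via parts (ii)--(iv) of Lemma~\ref{lem:ppd}, depending on the type), so that $S$ is cyclic on its $s$-part for each $s\in\mathcal{S}(m)$ and of order divisible by something comparable to $q$. Applying Lemma~\ref{lem: bounding phi s} to each such $s$-subgroup gives $\phi(|S_s|)=s^{b-1}(s-1)\leqslant r n$, where $r=|N_{\Aut(G)}(S):C_{\Aut(G)}(S)|$ is the relevant orbit-size factor. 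The crux is that for tori arising from these Zsygmondy-type primes the normaliser quotient $r$ is controlled — typically $r$ divides a small multiple of $m$ (the Weyl-group action on the torus together with field automorphisms), and $m\leqslant d\leqslant n$ by Step 1 — so $r$ is bounded by a function of $n$. Combining the inequalities over the primes in $\mathcal{S}(m)$ and using that $\prodS(m,q)$ is large (at least a constant multiple of $q$, by Lemma~\ref{lem:ppd}) yields $q\leqslant h(n)$ for some explicit function $h$.

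\textbf{Step 3: assembling the bound and handling cases.} Once $d\leqslant n$ and $q\leqslant h(n)$ are both established, $|T|<q^{d^2}\leqslant h(n)^{n^2}$ (crudely), and one packages this into an increasing integer function $g$. I expect the main obstacle to be Step 2, specifically verifying uniformly across all five families in Table~\ref{tab: classical gps} that a cyclic torus of order comparable to $q$ exists whose normaliser-to-centraliser index $r$ is bounded in terms of $n$; the subtleties of Lemma~\ref{lem:ppd}(iii)--(iv) (the $q^m+1$ versus $q^m-1$ dichotomy, and the $s=m$ exceptions) suggest that the torus and prime $m$ must be chosen type-by-type, with unitary and the twisted orthogonal groups requiring the $q^m+1$ factors. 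The characteristic-$p$ case is handled separately by Step 1, and a finite list of small $(d,q)$ exceptions from Table~\ref{tab: classical gps} can be absorbed into $g$ by inflating the constant.
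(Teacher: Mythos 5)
Your overall strategy is the same as the paper's: bound $d$ by $n$ via Lemma~\ref{lem:unipbound}, then bound $q$ by applying Lemma~\ref{lem: bounding phi s} to the Sylow $s$-subgroups of a cyclic torus for the primes $s\in\calS(m)$, and count divisors to control $|\calS(m)|$. However, there is a genuine gap at the crux of your Step 2. You assert that the normaliser--centraliser index $r=|N_{\Aut(T)}(S):C_{\Aut(T)}(S)|$ ``divides a small multiple of $m$'' and hence, since $m\leqslant d\leqslant n$, is bounded by a function of $n$. This is false: writing $q=p^a$, the field automorphisms (the Frobenius $x\mapsto x^p$) act nontrivially on a Singer cycle, so the correct statement — and the one the paper proves — is that $r$ divides $2am$ (linear case) or $12ma$ (the other types). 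The factor $a$ is not bounded by any function of $n$, so $r$ is not either. Consequently the upper bound you can actually extract is of the form $\prodS(m,q)\leqslant (2c_2(n)a)^{2\sqrt{c_2(n)a}}$, a function of both $n$ \emph{and} $a$, while the lower bound from Lemma~\ref{lem:ppd} is comparable to $q=p^a$. The inequality $p^a\leqslant c_3(n)^{\sqrt{a}\log a}$ does not immediately yield $q\leqslant h(n)$; one must still argue that exponential growth in $a$ on the left beats the subexponential $\sqrt{a}\log a$ exponent on the right, which is exactly what the paper does via Lemmas~\ref{lem: sqrtx over log x} and~\ref{lem: log bound} to bound $p$ and $a$ separately. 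Your proposal omits this step entirely, and it is the analytic heart of the proof rather than a routine verification.

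A secondary, smaller issue: you suggest the exceptions can be ``absorbed into $g$ by inflating the constant'' as a finite list of small $(d,q)$. The cases $m=d$ (namely $\PSL(2,q)$, $\PSL(3,q)$, $\PSU(3,q)$, $\PSp(4,q)$, $\PSp(6,q)$, $\POmega^\circ(7,q)$) are infinite families, not finitely many groups; the paper handles them with separately chosen cyclic subgroups (its Table~\ref{tab:remaining cases with m=d}) and reruns the same $a$-versus-$n$ analysis for each. These cases cannot be dismissed by enlarging a constant, though the machinery needed is the same once the main gap above is repaired.
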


\begin{proof} The group $T$ has a natural module $V$ as in Table~\ref{tab: classical gps}, and it is convenient to work with the preimage $G\leqslant \GL(V)$ acting linearly on $V$. We set $q=p^a$ with $p$ prime. 
Let $\varphi$ denote the natural map $\varphi:G \rightarrow T$, and for a subgroup $H$ of $T$ let $\wh{H}$ denote the full preimage of $H$
under $\varphi$. We first observe that, by Lemma~\ref{lem:unipbound}, the number of $\Aut(T)$-classes of unipotent elements of $T$ is at least $d$, and hence    $d\leqslant c_1(n)$ with $c_1(n)=n$. 

There is a prime $m$ satisfying $d/2<m\leqslant d$, and we choose the smallest possible value for $m$, except that we choose $m=3$ if $d=3$. Then one of the following holds:
\begin{enumerate}
    \item[(i)] $m<d$; 
    \item[(ii)] $d=m=2$ with $T=\PSL(2,q)$ or $\PSp(4,q)$;   or 
    \item[(iii)] $d=m=3$ with $T=\PSL(3,q)$,  $\PSU(3,q)$, $\PSp(6,q)$, or $\POmega^\circ(7,q)$.   
\end{enumerate}

\medskip\noindent\emph{Choice of decomposition:} \quad If $m<d$ then we choose a decomposition $V=U\oplus W$, with $\dim(U)=m$ in the linear and unitary cases, and $\dim(U)=2m$ in the symplectic and orthogonal cases  (and possibly $\dim(W)=0$).  Additionally, if $G$ preserves a form, we choose $U$ to be nondegenerate and $W=U^\perp$. Finally, in the orthogonal case, we choose $U$ of minus type. 

\medskip\noindent\emph{Choice of cyclic subgroup:} \quad 
In our arguments we will work with a cyclic subgroup of $G$ that stabilises the decomposition $V=U\oplus W$ and acts trivially on $W$   (see \cite[Section 3]{NieP} for a description of the linear action of these tori). More specifically: 
\begin{enumerate}
    \item[(L)]  if $T=\PSL(d,q)$ we consider a cyclic subgroup  ${H}$ of order $\frac{q^m-1}{(q-1)\cdot (m,q-1)}$ such that $\wh{H}^U$ is a Singer cycle of $\SL(U)$, and $\wh{H}$ fixes $W$ pointwise;
    \item[(U)] if $T=\PSU(d,q)$ then $m$ is an odd prime and we consider a cyclic subgroup  ${H}$ of order $\frac{q^m+1}{(q+1)\cdot (m,q+1)}$ such that $\wh{H}^U$ is a Singer cycle of $\SU(U)$ and $\wh{H}$ fixes $W$ pointwise;
    \item[(Sp)] if $T=\PSp(2d,q)$ we consider a cyclic subgroup  $\wh{H}$ of order $q^m+1$ such that $\wh{H}^U$ is a maximal torus of $\Sp(2m,q)$ and $\wh{H}$ fixes $W$ pointwise;
    \item[(O)]  if $T=\POmega^\epsilon(2d,q)$ or $\POmega^\circ(2d+1,q)$ with $m<d$, and also in the exceptional case where $T=\POmega^\circ(7,q)$ with $q$ odd and $d=m=3$, the parameter $m$ is an odd prime and  $U$ is of minus type and dimension $2m$. By \cite[Lemma 4.1.1(ii)]{KL}, 
    the stabiliser in $G$ of the decomposition $V=U\oplus W$ contains the subgroup $\Omega(U)\times 1\cong\Omega^-(2m,q)$ which fixes $W$ pointwise.  We consider a cyclic subgroup  $\wh{H}$ of $\Omega(U)\times 1$ of order $\frac{q^m+1}{(q+1)_2}$ such that $\wh{H}^U$ is contained in a Singer cycle of $\Omega(U)$. Note that, $|{\rm O}(U):\Omega(U)|=2\cdot (2,q-1)$ (see \cite[Table 2.1.C]{KL}), and that $\frac{q^m+1}{q+1}$ is odd since $m$ is odd, so the $2$-part $(q^m+1)_2=(q+1)_2$ and $|\wh{H}|=(q^m+1)_{2'}$.

    \medskip Thus the cyclic subgroup $\wh{H}$ has the properties given in the appropriate row of Table~\ref{tab:choices for H in all gennric cases}. 
    First we consider the linear case.

\end{enumerate}
\begin{table}[ht]
    \centering
    \begin{tabular}{c|c |c |c |c |l  } \hline 
        Case & $G$ & $|\wh{H}|$ & $\wh{H}^U$ &  $\wh{H}^W$ &  Notes  \\ \hline 
         (L)&  $ \SL(d,q)$ &    $ (q^m-1)/(q-1)$ &  $\leqslant \GL(1,q^m)$ & 1 &    $m=d$ if $d=2$ or $3$   \\ 
         (U)&  $ \SU(d,q)$ &    $ (q^m+1)/(q+1)$ &  $\leqslant \GU(1,q^m)$ & 1 & $m=d$  if $d=3$   \\ 
         (Sp)&  $ \Sp(2d,q)$ &    $ q^m+1 $ &  $\leqslant \Sp(2,q^m)$ & 1 & $m=d$  if $d=2$ or $3$     \\ 
         (O)&  $ \Omega(2d+1,q)$ &    $ (q^m+1)_{2'}$ & $\leqslant \mathrm{GO}^-(2,q^m)$ & 1 &   $m=d$   if $d=3$\\ 
         (O) &  $ \Omega^\pm(2d,q)$ &    $ (q^m+1)_{2'}$ & $\leqslant \mathrm{GO}^-(2,q^m)$ & 1 & \\ \hline 
    \end{tabular}
    \caption{Choices for a cyclic subgroup $\wh{H}$ of $G$}
    \label{tab:choices for H in all gennric cases}
\end{table}



\medskip
\noindent
\emph{Case: $T=\PSL(d,q)$.}\quad 
Here $|\wh{H}|=(q^m-1)/(q-1)$ and we set $H=\varphi(\wh{H})$. Also $H\cong \wh{H}$ if $m<d$, or if $m=d=3$ with $\gcd(3,q-1)=1$, or if $m=d=2$ with $q$ even, since in these cases $\wh{H}$ contains no nontrivial scalar matrix. We assume first that  $|H|=(q^m-1)/(q-1)$, and comment at the end on how to adjust our argument to deal with the exceptional cases when $m=d=2$ or $3$ and $m$ divides $q-1$. 

The centraliser of $\wh{H}^U$  in $\GL(U)$ is the full Singer cycle $C=\GL(1,q^m)$, and $N_{\GL(U)}(\wh{H})=C.m$ (see \cite[Lemma 2.1]{NeuP} or \cite[Satz II.7.3 on page 187]{Hu}); and setting $q=p^a$ with $p$ prime, we have 
\[
N_{\Gamma\mathrm{L}(d,q)}(\wh{H}) = (\GL(1,q^m)\cdot m\times\GL(d-m,q))\cdot a
\]
(or $\GL(1,q^m)\cdot ma$ if $m=d$).
It follows that $r:=|N_{\Aut(T)}(H)/C_{\Aut(T)}(H)|$ divides $2am$ (noting that
in general an outer automorphism corresponding to the ``inverse transpose map''
will act nontrivially on $H$ if $d>2$).

Let $s\in\calS(m)$ as defined in Section~\ref{s:prod}, and let $\wh{S}$ be the Sylow $s$-subgroup of $\wh{H}$ and $S=\varphi(\wh{S})$. 
Since $|\wh{S}|$ does not divide $q-1$ by the definition of $\calS(m)$, we have $|S|=s^b$ for some $b  \geqslant  1$ and $\phi(|S|)=s^{b-1}(s-1)$. 
Also  
$\wh{S}$ is irreducible on $U$, and $S$ and $H$ have the same centraliser and
normaliser in $\Aut(T)$ (see \cite[Lemma 2.1]{NeuP}).   Hence $ |N_{\Aut(T)}(S)/C_{\Aut(T)}(S)|=|N_{\Aut(T)}(H)/C_{\Aut(T)}(H)|=r$ divides $2am$. Further, by Lemma~\ref{lem: bounding phi s}, the number of $\Aut(T)$-classes of elements of $T$ of order $|S|$ is $\phi(|S|)/r$ and divides $n!$. Thus $\phi(|S|)$ divides $r\cdot n!$, which divides 
 $2am\cdot n!$. Since $m\leqslant d$ and $d \leqslant c_1(n)=n$, it follows that  $2m\leqslant 2n$ and hence $2m$ divides $(2n)!$. Thus 
 \begin{equation}
\label{eq:phi s divides}
\phi(|S|)=s^{b-1}(s-1)	\quad \mathrm{divides} \quad c_2(n)a,\quad \text{where} \ c_2(n) = (2n)!\cdot n!.
\end{equation}
Note that the bound $c_2(n)a$ is independent of the prime $s\in\calS(m)$.
The condition \eqref{eq:phi s divides} implies that $s-1$ is a
divisor of $c_2(n)a$, and since the number of divisors of $c_2(n)a$ is less than
$2\sqrt{c_2(n)a}$ (see \cite[Section 8.3]{nzm}), it follows that 
$|\calS(m)| < 2\sqrt{c_2(n)a}$. 

Now $|H| = |\varphi(\wh{H})| = (q^m-1)/ (q-1)$ and recall that $m$ is prime. If  
$s\in\calS(m)$ and $s$ does not divide $q-1$, then
$(q^m-1)_s = |H|_s$ and in this case  by (\ref{eq:phi s divides}) we have
\[
(q^m-1)_s = |S|= s^b = s\phi(|S|)/(s-1) \leqslant  2c_2(n)a.
\]
On the other hand if  $s\in\calS(m)$ and $s$ divides $q-1$, then $s=m$ by Lemma~\ref{lem:ppd}(iv). In this case,  
\[
\frac{(q^m-1)_m}{(q-1)_m}= |S| = m \phi(|S|)/(m-1) \leqslant 2c_2(n)a. 
\]
 Putting this together we see that
\[
\prodS(m,q) = \prod_{s\in \calS(m)} (q^m-1)_s \leqslant (q-1)_m\cdot (2c_2(n)a)^{2\sqrt{c_2(n)a}}\quad \text{so}\quad    \frac{\prodS(m,q)}{(q-1)_m}\leqslant (2c_2(n)a)^{2\sqrt{c_2(n)a}}.
\]
By Lemma~\ref{lem:ppd}(iv), we obtain a lower bound
\[
\frac{\prodS(m,q)}{(q-1)_m}\geqslant  \frac{1}{(q-1)_m} \cdot \frac{q^m-1}{q-1} \geqslant  \frac{q^m-1}{(q-1)^2}. 
\]
Suppose first that $m\geqslant 3$. Then $(q^m-1)/(q-1)^2 > q=p^a$. If $a=1$, this gives the upper bound $q< (2c_2(n))^{2\sqrt{c_2(n)}}$ and we are done. So suppose that $a>1$. Now
\[
(2c_2(n)a)^{2\sqrt{c_2(n)a}} = \left ( (2c_2(n) )^{2\sqrt{c_2(n)}} \right )^{\sqrt{a}} \left (e^{2\sqrt{c_2(n)}}\right )^{\log(a)\sqrt{a}}   \leqslant  c_3(n)^{\sqrt{a}\log a},
\]
for some function $c_3(n)$. Hence 
$a\log p < \sqrt{a}\log a \log(c_3(n))$ which yields $\frac{\sqrt{a}\log p}{\log(a)} < \log (c_3(n)) $. 
Since $a >  1$, we have $\sqrt{a}/\log(a) \geqslant 1$, so $\log(p) \leqslant \log(c_3(n))$ and hence $p\leqslant c_3(n)$. Also, for all $a\geqslant 1$, Lemma~\ref{lem: sqrtx over log x} gives  $\frac{a^{1/4}\log2}{4} \leqslant \frac{\sqrt{a}\log 2  }{\log(a)} \leqslant  \frac{\sqrt{a}\log p} {\log(a)}$, and hence $ a \leqslant \left (\frac{4}{\log2}\log(c_3(n)) \right)^4$.
Thus, when $m\geqslant 3$, we have shown that all of
$a$, $p$ and $d$ are bounded above by functions of $n$, and hence $|T|$ is
also bounded above by some function of $n$. If $m=d=2$ with $q$ even, then $(q-1)_m=1$ and, by Lemma~\ref{lem:ppd}(ii), $p^a<q+1\leqslant \prodS(m,q)$. Our arguments therefore give $p^a< \prodS(m,q) \leqslant c_3(n)^{\sqrt{a}\log a}$, and the same argument yields the required bound.

Now we treat the two exceptional cases: if $m=d=3$ with $3$ dividing $q-1$, then $\wh{H}$ has order $(q^3-1)/(q-1)=q^2+q+1$ and its image $H=\varphi(\wh{H})$ has order $(q^2+q+1)/3$. Also,  by Lemma~\ref{lem:ppd}(iv), $(q^3-1)/(q-1)\leqslant \prodS(m,q)$. Thus the argument above yields
$p^a=q<(q^3-1)/(q-1)^2\leqslant \prodS(3,q)/(q-1)_3 = 3\cdot|H|\leqslant c_3'(n)^{\sqrt{a}\log a}$, and the required bound.

Finally assume that $m=d=2$ with $q$ odd. Here $T$ has unique conjugacy classes of cyclic subgroups of orders $(q+1)/2$ and $(q-1)/2$, and one of these orders is odd, say $(q+\delta)/2$. We choose $H<T$ with $H$ cyclic of odd order $(q+\delta)/2$, and for each prime $s$ dividing $(q+\delta)/2$, we consider the Sylow $s$-subgroup $S$ of $H$. The index $|N_{\Aut(T)}(S) : C_{\Aut(T)}(S)|=2a$, and hence by Lemma~\ref{lem: bounding phi s}, the number of $\Aut(T)$-classes of elements of $T$ of order $|S|$ divides $2a\cdot n!$. Now a very similar argument to that above shows that the product over the primes $s$ of the orders of these Sylow subgroups of $H$ (which equals $|H|$) is bounded 
above by $c_3(n)^{\sqrt{a}\log a}$ for some function $c_3(n)$, and since $|H|\geqslant  (q-1)/2\geqslant  p^a/3$, we conclude that $p$ and $a$, and hence also $q$ are bounded by some function of $n$. Thus $|T|$ is bounded by a function of $n$, which completes the proof in the linear case. 

\medskip



\medskip
\noindent
\emph{Case: The remaining cases with $m=d$ in (ii) and (iii).}\quad 
Here $T$ and $m$ are as in Table~\ref{tab:remaining cases with m=d}, and in each case $T$ contains a 
cyclic subgroup $H$ with order as in the respective row of Table~\ref{tab:remaining cases with m=d}. For each prime $s$ dividing $|H|$, we consider the Sylow $s$-subgroup $S$ of $H$. Then $H$ and $S$ have the same centralisers and normalisers in $\Aut(T)$, and the index $r=|N_{\Aut(T)}(S) : C_{\Aut(T)}(S)|$ is as in Table~\ref{tab:remaining cases with m=d}.
\begin{table}[ht]
    \centering
    \begin{tabular}{c|c | c |c}  
    \hline
         $T$& $m$ & $|H|$ & $r$ \\ \hline
         $\PSp(4,q)$ & $2$ & $(q^2+1)/(2,q-1)$ & $4a\cdot(2,q)$ \\
        $\PSU(3,q)$ & $3$ &  $(q^2-q+1)/(3,q+1)$  & $6a$ \\
         $\PSp(6,q)$ & $3$ & $(q^3+1)/(2,q+1)$& $6a$ \\
         $\POmega^\circ(7,q)$& $3$ &  $(q^3+1)_{2'}$& $6a$ \\ \hline
    \end{tabular}
    \caption{Orders of cyclic subgroups of $T$ in the remaining cases where $m=d$}
    \label{tab:remaining cases with m=d}
\end{table}

By Lemma~\ref{lem: bounding phi s}, the number of $\Aut(T)$-classes of elements of $T$ of order $|S|$ is equal to $\phi(|S|)/r$ and hence $\phi(|S|)$ divides $r\cdot n!$, which divides $24a\cdot n!$ in each of these four cases, and this bound is independent of $s$. Thus in each case 
\[
|S|=s\cdot\frac{\phi(|S|)}{s-1}< 2\phi(|S|)\leqslant 48 a\cdot n!
\]
The number of divisors of $24a\cdot n!$ is less than $2\sqrt{24a\cdot n!}$ by \cite[Section 8.3]{nzm}, so $|H|$, which is the product of $|S|$ over all $s$, satisfies  
\[
|H|\leqslant \left(48 a\cdot n!\right)^{2\sqrt{24a\cdot n!}}.
\]
 On the other hand, in all cases $|H|> q=p^{a}$. If $a=1$, then this bounds $q$ by a function of $n$ and we are done. So suppose that $a>1$. Then 
\[
|H|\leqslant \left(48 a\cdot n!\right)^{2\sqrt{24a\cdot n!}} = \left ( 48\cdot n!)^{2\sqrt{24\cdot n!}} \right )^{\sqrt{a}} \left (e^{2\sqrt{24\cdot n!}}\right )^{\log(a)\sqrt{a}}  \leqslant c(n)^{\sqrt{a}\log a}
\]
for some function $c(n)$. Thus $p^a = q < |H| \leqslant c(n)^{\sqrt{a}\log a}$, and so taking logs we obtain $(\sqrt{a}\log p)/\log a\leqslant \log(c(n))$. Arguing as in the previous case, this bounds both $a$ and $p$, and hence also $q$ and $|T|$,  by some function of $n$.   

\medskip
\noindent 
\emph{Case: The other classical groups with $m<d$.}\quad Here $m$ is an odd prime, $d\geqslant 4$, and the preimage $G$ of $T$ has a cyclic subgroup $\wh{H}$ as in case (U), (Sp), or (O) of Table~\ref{tab:choices for H in all gennric cases}.
Since $m<d$, $\wh{H}$ has nontrivial fixed point space in $V$ and hence contains no nontrivial scalars, so $H:=\varphi(\wh{H})\cong \wh{H}$. In all cases $|H|$ divides $q^m+1$.

By by Lemma~\ref{lem:ppd}(iii) and (iv), $2\not\in\calS(2m)$, and $\prodS(2m,q)$ divides $q^m+1$ and is divisible by $(q^m+1)/(q+1)$. Moreover,  by Lemma~\ref{lem:ppd}(iv)(1), if $s\in\calS(2m)$ then $s$ divides $q^m+1$, and either $(q^m+1)_s = \prodS(2m,q)_s$ divides $(q^m+1)/(q+1)$, or $s=m$ divides $q+1$.  
In the former case we have $(q^m+1)_s=|H|_s = \prodS(2m,q)_s$; while if  $s=m$ divides $q+1$ then, since $m$ is odd, either we again have $(q^m+1)_s=|H|_s = \prodS(2m,q)_s$, or $T=\PSU(m,q)$ and $|H|_m= \frac{(q^m+1)_m}{(q+1)_m}$. 

Now we argue as in the linear case for the primes in $\calS(2m)$ (rather than $\calS(m)$).   Let $s\in\calS(2m)$ and let $\wh{S}$ be the Sylow $s$-subgroup of $\wh{H}$ and $S=\varphi(\wh{S})$ the Sylow $s$-subgroup of $H$. An identical argument to that in the linear case shows that   the number of $\Aut(T)$-classes of elements of $T$ of order $|S|$ is $\phi(|S|)/r$ and divides $n!$, where $r=|N_{\Aut(T)}(H)/C_{\Aut(T)}(H)|$, and for each of the groups in this case $r$ divides $12ma$. The argument thus shows that $\phi(|S|)$ divides $c_2(n)a$ for some function $c_2(n)$, and that there are at most $2\sqrt{c_2(n)a}$ primes in $\calS(2m)$.
For each prime $s$ where  $(q^m+1)_s=|H|_s = \prodS(2m,q)_s$, we therefore have 
\[
(q^m+1)_s=|S| = s\phi(|S|)/(s-1) <2c_2(n)a,
\]
There is at most one prime in $\calS(2m)$ for which this condition fails, namely the prime $s=m$ if $m\mid q+1$ and $T=\PSU(d,q)$. In this exceptional case we have instead that 
\[
\frac{(q^m+1)_m}{(q+1)_m}=|S| = m\phi(|S|)/(m-1) <2c_2(n)a.
\]
Since $s$ is odd, $s$ divides exactly one of \(q^m+1 \)  and \(q^m-1 \). Since $s\in \mathcal S(2m)$, we must therefore have   that $(q^m+1)_s=(q^{2m}-1)_s$.
Thus in the case where $m$ does not arise as an exceptional prime in $\calS(2m)$, we have 
\[
\prodS(2m,q) =\prod_{s\in\calS(2m)} (q^{2m}-1)_s = \prod_{s\in\calS(2m)} (q^m+1)_s 
\leqslant (2c_2(n)a)^{2\sqrt{c_2(n)a}}\leqslant c_3(n)^{\sqrt{a}\log a}
\]
for some $c_3(n)$, and as $\prodS(2m,q)\geqslant  (q^m+1)/(q+1)>q=p^a$, we obtain the required bound on $q$ and $|T|$ as before. 
On the other hand, if $m$ does arise as an exception, so in particular $T=\PSU(d,q)$, then 
we obtain 
\[
\frac{\prodS(2m,q)}{(q+1)_m} =\frac{1}{(q+1)_m} \prod_{s\in\calS(2m)} (q^m+1)_s
\leqslant (2c_2(n)a)^{2\sqrt{c_2(n)a}}\leqslant c'_3(n)^{\sqrt{a}\log a}
\]
for some $c_3'(n)$. Now $\frac{\prodS(2m,q)}{(q+1)_m}\geqslant  (q^m+1)/(q+1)^2\geqslant  (q^3+1)/(q+1)^2>q-2$, and the usual argument yields the required bound. 
\end{proof}

\begin{remark}
    \label{rem:on the function}
    We briefly comment on the function $f(n)$ which appears in Theorem~\ref{t:main}. This  function is the maximum over the functions for different families of simple groups.   From the linear case when $d>2$ and  $q=p$, we have $c_1(n)=n$ so that  $c_2(n)= (2  n)!\cdot n!$. Then \[c_3(n) = 2 ( (2n)!\cdot n!)^{2 \sqrt{(2n)!\cdot n!}}\]
    and so we obtain $\log(p) \leqslant \log (c_3(n)) = \left ({2 \sqrt{(2n)!\cdot n!}}\right ) \log(2 ( (2n)!\cdot n!))$. Since we have $|T| \leqslant p^{d^2}$, we have shown $\log(|T|) \leqslant d^2 \log(p)$ which gives
    \[
    \log(|T|) \leqslant n^2 \left ({2 \sqrt{(2n)!\cdot n!}}\right ) \log(2 ( (2n)!\cdot n!)).
    \]
    The functions arising from the other cases of classical simple groups are similar.
\end{remark}

\begin{remark}
    \label{rem: bounding mexpt for classical}
In  the course of the proof of Proposition~\ref{prop:class} for simple classical groups $T$, we make use of $s$-elements, for carefully chosen primes $s$. One of these primes is the characteristic $s=p$, and for all the other primes $s$, we bound the number of $\Aut(T)$-classes of elements of maximal order $\exp(T)_s = |T|_s$, as discussed in Remark~\ref{rem:mexp}. Thus if $\mathcal{S}(T)$ is the set of these primes $s$, then in our proof of Proposition~\ref{prop:class} we have bounded $|T|$ by a function of the parameter $m'(T)=\max\{m_p(T), m_{\mathcal{S}(T)\text{-exp}}(T)\}$ defined in Remark~\ref{rem:mexp}. We showed in Remark~\ref{rem:mexp-alt} that a similar bound holds for alternating groups. However, our approach to the exceptional groups of Lie type is different from the classical case (see Section~\ref{sec:excep} below), and it is an open question if such a bound holds for the simple exceptional groups of Lie type.


\end{remark}

\section{The exceptional groups}
\label{sec:excep}

In this section we complete the proof of Theorem~\ref{t:main} for the simple groups of Lie type that are exceptional. Our treatment is divided into two cases, depending on whether or not there is a torus subgroup whose normaliser is maximal. Remarkably, most families have this property.

\begin{table}[h]
    \centering
    \begin{tabular}{c|c|c|c |c | l }
    \hline
        $T$ & $|H|$ &   $N_T(H)$ & $z$ & $|\Out(T)|$ & Notes \\ \hline 
        ${}^2B_2(q)$ & ${q\pm \sqrt{2q}+1}$ &  $H.4$ & $4$ & $a$ &  $q=2^{2t+1}$, \cite[Theorem 9]{Suz62}\\
        ${}^2G_2(q)$ & ${q\pm \sqrt{3q}+1}$ &  $H.6$ & $6$ & $a$ & $q=3^{2t+1}$, \cite[Theorem C]{KleidmanG2}\\ 
        ${}^2F_4(q)$ & ${q^2+q+1\pm \sqrt{2q}(q+1)}$ & $H.12$ & $12$ & $a$ & $q=2^{2t+1}$, \cite[Main Theorem]{Malle91} \\
        ${}^3D_4(q)$ & ${q^4-q^2+1}$  & $ H.4 $ & $4$ & $a$ & \cite[Theorem]{Kleidman3D4} \\
        $ F_4(q)$ & $q^4-q^2+1$  & $H.12 $ & $12$ & $2a$ & $q$ even, $q>2$, \cite[Table 8]{Craven2023} \\
        $E_6(q)$ & $(q^2+q+1)/e$   & $({}^3D_4(q) \times H).3 $ & $3$ & $2ae$ & \cite[Table 9]{Craven2023}\\
        ${}^2 E_6(q)$  & $(q^2-q+1)/e'$ & $({}^3D_4(q) \times H).3 $ & $3$ & $ae'$ & $q>2$, \cite[Table 10]{Craven2023}\\
        $ E_7(q)$ & $(q-1)/d$ & $(E_6(q) \times H).2 $ & $2$ & $ad$ & $q\equiv 2 \pmod{3}$, \cite[Table 5.1]{LiebeckSaxlSeitz} \\
        $ E_7(q)$ & $(q+1)/d$ & $({}^2E_6(q) \times H).2 $ & $2$ & $ad$ & $q \not\equiv 2 \pmod{3}$, \cite[Table 5.1]{LiebeckSaxlSeitz} \\
        $E_8(q)$ & $(q^4-1)(q^4-q^3+q)+1$ & $H. 30$ & $30$ & $a$  & \cite[Table 5.2]{LiebeckSaxlSeitz}\\ \hline
    \end{tabular}
    \caption{Choices for a cyclic subgroup $H$ for  exceptional groups $T$ defined  over a field of size $q=p^a$, $p$ a prime,  with $N_T(H)$ maximal,    $d=(2,q-1)$,    $e=(3,q-1)$, $e'=(3,q+1)$}
    \label{tab:easy choices for H for some exceptionals}
\end{table}

\begin{lemma}
\label{lem: easy for exceptional}
%
%
For each simple group $T$ in Table~\ref{tab:easy choices for H for some exceptionals}, there exists a cyclic subgroup $H$ of  the claimed order. Let  $s$ be  prime dividing $|H|$, let $H_s$ be the Sylow $s$-subgroup of $H$ and let $z$ be the corresponding value from Table~\ref{tab:easy choices for H for some exceptionals}. Then one of the following holds:
\begin{enumerate}[$(1)$]
    \item $H_s$ is characteristic in $N_T(H)$, $N_{\Aut(T)}(H)=N_{\Aut(T)}(H_s)$ and $|N_{\Aut(T)}(H) : C_{\Aut(T)}(H_s)|$ divides $z|\Out(T)|$, or
    \item $T=E_7(q)$, $q$ is odd, $s=2$  and there is a nontrivial characteristic cyclic $2$-subgroup $Y_2$ of $N_T(H)$  of order at least $|H|_2/2$ such that  $N_{\Aut(T)}(H) = N_{\Aut(T)}(Y_2)$ and $|N_{\Aut(T)}(H) : C_{\Aut(T)}(Y_2)|$ divides $z|\Out(T)|$, or
    \item $T=E_7(q)$, $q$ is odd, $s=2$   and $|H|_2 \leqslant 2 $.
    \end{enumerate}
\end{lemma}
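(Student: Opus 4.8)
The plan is to read off from the references in Table~\ref{tab:easy choices for H for some exceptionals} both the existence of the cyclic subgroup $H$ of the stated order and the structure of its normaliser: $N_T(H)=H.z$ for the rank one twisted groups and for ${}^3D_4(q)$, $F_4(q)$, $E_8(q)$, and $N_T(H)=(D\times H).z$ with $D$ a simple direct factor for $E_6$, ${}^2E_6$, $E_7$. In each case the factor $D$, when present, centralises $H$, so $C_T(H)=D\times H$ and the action of $N_T(H)$ on $H$ factors through the quotient of order $z$; hence $|N_T(H):C_T(H)|$ divides $z$. Identifying $T=\Inn(T)\le\Aut(T)$, inside $N_{\Aut(T)}(H)/C_{\Aut(T)}(H)$, which embeds in $\Aut(H)$, the image of $N_T(H)$ has order $|N_T(H):C_T(H)|$ dividing $z$, while the quotient by this image embeds into $\Out(T)$; thus $|N_{\Aut(T)}(H):C_{\Aut(T)}(H)|$ divides $z|\Out(T)|$. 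As $C_{\Aut(T)}(H)\le C_{\Aut(T)}(H_s)$, the index $|N_{\Aut(T)}(H):C_{\Aut(T)}(H_s)|$ then divides $z|\Out(T)|$ automatically, so the whole lemma reduces to two points per prime $s$: that $H_s$ is characteristic in $N_T(H)$, and that $N_{\Aut(T)}(H)=N_{\Aut(T)}(H_s)$.

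The generic situation is $s\nmid z$, and I would first check that this holds for every prime $s\mid|H|$ in every row except $T=E_7(q)$ with $q$ odd and $s=2$. This is a cyclotomic computation: up to the factors $e=(3,q-1)$, $e'=(3,q+1)$, $d=(2,q-1)$, the orders $|H|$ are values $\Phi_k(q)$ of cyclotomic polynomials, and a Zsigmondy--Bang analysis gives $\gcd(|H|,z)=1$ in each case. For example $\Phi_{12}(q)=q^4-q^2+1$ is odd and congruent to $1$ modulo $3$, hence coprime to $z=12$ for ${}^3D_4(q)$ and $F_4(q)$; the divisors $e,e'$ remove the unique factor of $3$ in the $E_6$ and ${}^2E_6$ rows; and for $E_8$ the only prime dividing $z=30$ that could divide $\Phi_{30}(q)$ is $5$, which is excluded because $\mathrm{ord}_5(q)$ divides $4$ so that $30$ is not of the form $\mathrm{ord}_5(q)\cdot 5^{\,j}$. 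Granting $s\nmid z$, the Sylow $s$-subgroup of $N_T(H)$ lies in $D\times H$, and since $D$ is simple with $\mathrm{O}_s(D)=1$ one gets $\mathrm{O}_s(D\times H)=H_s$ and then $\mathrm{O}_s(N_T(H))=H_s$; thus $H_s$ is characteristic in $N_T(H)$. For the equality of normalisers I would use that the generator of $H_s$ is a regular element, so $C_T(H_s)=C_T(H)$; since $H$ is recovered from $C_T(H)$ (as $C_T(H)$ itself in the $H.z$ rows, and as its centre $Z(D\times H)=H$ in the direct-product rows, $D$ being centreless), any automorphism normalising $H_s$ normalises $C_T(H_s)=C_T(H)$ and hence $H$, giving $N_{\Aut(T)}(H_s)\le N_{\Aut(T)}(H)$; the reverse inclusion is immediate as $H_s$ is characteristic in $H$. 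This settles conclusion~(1).

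It remains to treat $T=E_7(q)$ with $q$ odd and $s=2$, where $s=z=2$ and both halves of the generic argument fail: the top $2$-element of $N_T(H)=(E_6(q)\times H).2$ or $({}^2E_6(q)\times H).2$ can enlarge $\mathrm{O}_2$, and $2$ is not a primitive prime divisor, so $C_T(H_2)$ may properly contain $C_T(H)$ and $H_2$ may admit automorphisms from the large group $\Aut(H_2)$. If $|H|_2\le 2$ there is nothing to bound and we record conclusion~(3). Otherwise I would analyse the action of the Weyl involution realising the top factor on the cyclic $2$-group $H_2$ and exhibit a characteristic cyclic $2$-subgroup $Y_2$ of $N_T(H)$ of index at most $2$ in $H_2$, so that $|Y_2|\ge|H|_2/2$, and then verify $C_T(Y_2)=C_T(H)$ to recover $N_{\Aut(T)}(Y_2)=N_{\Aut(T)}(H)$ and the index bound exactly as above, giving conclusion~(2).

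The main obstacle is this last $E_7$ even case: one must describe the $2$-local structure of $(E_6(q)\times H).2$ precisely enough to locate a characteristic cyclic subgroup of index at most $2$ and to confirm that it, unlike $H_2$ itself, still satisfies $N_{\Aut(T)}(Y_2)=N_{\Aut(T)}(H)$ with index dividing $z|\Out(T)|$. Everything else is the uniform $\mathrm{O}_s$-argument together with the coprimality $\gcd(|H|,z)=1$, both routine once the structural data of Table~\ref{tab:easy choices for H for some exceptionals} are in hand.
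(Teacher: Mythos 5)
There is a genuine gap, and it sits exactly at the step your argument leans on hardest. The paper's proof of $N_{\Aut(T)}(H)=N_{\Aut(T)}(H_s)$ does not go through centralisers of $H_s$ at all: it uses the fact, supplied by the references in the Notes column of Table~\ref{tab:easy choices for H for some exceptionals}, that $N_{\Aut(T)}(H)$ is a \emph{maximal} subgroup of $\Aut(T)$. Once $H_s=O_s(N_T(H))$ is characteristic in $N_T(H)$, one gets $N_{\Aut(T)}(H)\leqslant N_{\Aut(T)}(H_s)$, and since $H_s$ is nontrivial and $T$ is simple, $N_{\Aut(T)}(H_s)$ is proper in $\Aut(T)$, so maximality forces equality. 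You never invoke maximality. Instead you assert that a generator of $H_s$ is a regular element, hence $C_T(H_s)=C_T(H)$, and then recover $H$ from $C_T(H)$ as its centre. That regularity claim is exactly the hard content and it is not justified: for instance in the $E_6(q)$ row the primes $s$ dividing $|H|=(q^2+q+1)/e$ are only primitive prime divisors of $q^3-1$, and a priori the centraliser in $E_6(q)$ of a single $s$-element (or of $H_s$ when $|H_s|$ is a small prime such as $7$ or $13$) can be a semisimple subgroup strictly larger than $D\times H$; ruling this out requires a case analysis of semisimple centralisers that your sketch does not supply. The same substitution of ``verify $C_T(Y_2)=C_T(H)$'' for the maximality argument leaves your $E_7(q)$, $q$ odd, $s=2$ case incomplete, as you yourself flag: you do not actually exhibit $Y_2$. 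The paper does this concretely -- $Y_2=H_2$ if $O_2(N_T(H))\leqslant H$, and otherwise $Y_2=\Phi(O_2(N_T(H)))$, which is $\langle y\rangle$ or $\langle y^2\rangle$ for $H_2=\langle y\rangle$, hence characteristic, cyclic, of order at least $|H|_2/2$ -- and then closes the normaliser equality again by maximality rather than by any centraliser computation.

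The parts of your proposal that do match the paper are the existence of $H$ and the structure $N_T(H)=(D\times H).Z$ from the table references, the coprimality check $\gcd(s,z)=1$ outside the $E_7$/odd/$s=2$ case, the identification $H_s=O_s(N_T(H))$, and the bound $|N_{\Aut(T)}(H):C_{\Aut(T)}(H_s)|$ dividing $z|\Out(T)|$ via $D\times H\leqslant C_T(H)$. But without importing the maximality of $N_{\Aut(T)}(H)$ from the cited classifications (Liebeck--Saxl--Seitz, Kleidman, Malle, Craven, Suzuki), the normaliser equality in conclusions (1) and (2) remains unproved in your version.
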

\begin{proof}
The existence of a cyclic subgroup $H$ of $T$ of  the prescribed order is given by the reference in the Notes column of  Table~\ref{tab:easy choices for H for some exceptionals}. (For $T=E_7(q)$ see also \cite[Table 4.1]{Craven2022}.) Further, each reference proves that $N_{\Aut(T)}(H)$ is a maximal subgroup of $\Aut(T)$.

Let  $s$ be a prime dividing $|H|$ and let $H_s$ be the Sylow $s$-subgroup of $H$. We claim that $s$ is coprime to $z$, except for the case $T=E_7(q)$, $q$ is odd and $s=2$. This is easy to verify, for example, when $T=E_6(q)$ we have $|H|$ is coprime to $3$ unless $q\equiv 1 \pmod{3}$, in which case $q\equiv 1,4,7\pmod{9}$ and so $q^2+q+1\equiv 3\pmod{9}$, which implies that $|H| = (q^2+q+1)/3$ is coprime to $3$; or, for example, when $T=E_8(q)$ then $|H|=(q^4-1)(q^4-q^3+q)+1$ is coprime to $30$.

Suppose first  that  $T=E_7(q)$, $q$ is odd and $s=2$. We may assume that $|H|_2 > 2$ otherwise statement (3) of the Lemma holds.
 If $O_2(N_T(H)) \leqslant H$, then $O_2(N_T(H))=H_2$ and we set $Y_2 = H_2$.  Otherwise, $O_2(N_T(H)) = (H_2).2$. Write $H_2 = \langle y\rangle$. Then since $\langle y^2  \rangle $ is a normal subgroup of $O_2(N_T(H))=\langle y \rangle .2 $ of  index $4$,  we have that  $ \Phi(O_2(N_T(H))) = \langle y \rangle$ or $\langle y^2 \rangle$ (depending on whether $O_2(N_T(H))/ \langle y^2 \rangle$ is cyclic or elementary  abelian, respectively). Hence in this  case we  set $ Y_2 = \Phi(O_2(N_T(H)))$. In both cases,  $Y_2$ is a characteristic cyclic subgroup of $N_T(H)$ of order at least $|H|_2 / 2 $ (in particular, $Y_2\neq 1$).    It follows   that $N_{\Aut(T)}(H) \leqslant N_{\Aut(T)}(Y_2)$ and the maximality of $N_{\Aut(T)}(H)$ in $\Aut(T)$ gives equality. Finally, $C_T(H)$ contains $(N_T(H))'' \cong E_6(q)$ or ${}^2E_6(q)$, and $H\leqslant C_T(H)$, so we have that $|N_{\Aut(T)}(H) : C_{\Aut(T)}(H)|$ divides $z|\Out(T)|$. This proves that statement (2) of the lemma holds.

 Assume now that $T$ is any of the groups in Table~\ref{tab:easy choices for H for some exceptionals}, but that if $T=E_7(q)$ and $q$ is odd, then $s\neq 2$.
Let $H_s$ be the Sylow $s$-subgroup of $H$. Now $N_T(H) = (D \times H).Z$, where $D$ is either trivial or non-abelian simple and $Z$ is a cyclic group of order $z$, as in Table~\ref{tab:easy choices for H for some exceptionals}. 
Since we have excluded the case $s=2$ when $T=E_7(q)$ and $q$ is odd, we have that  $s$ is coprime to $z$. 
It follows that  $H_s=O_s(N_T(H))$, and so $H_s$ is characteristic in $N_T(H)$. Thus  $N_{\Aut(T)}(H) \leqslant N_{\Aut(T)}(H_s)$ and the maximality of $N_{\Aut(T)}(H)$ gives equality.   Finally, since $C_T(H)$ contains $D\times H$, we conclude that $|N_{\Aut(T)}(H) : C_{\Aut(T)}(H)|$ divides $z|\Out(T)|$. This proves that statement (1) of the lemma holds.
\end{proof}

\begin{table}[h]
    \centering
    \begin{tabular}{c|c|c|c |c |c |c }
    \hline
        $T$ & $|H|$ & $ M$ & $N_{T}(H)$ & $z$  & $|\Out(T)|$ & Notes \\ \hline 
        $G_2(q)$ & ${q^2 + q+1}$ & $\SL(3,q):2$ & $H:6$ & $6$ &  $ \leqslant 2a$  & \cite[Theorem 2.3]{Cooperstein81} \cite[Theorem A]{KleidmanG2}\\
        $F_4(q)$ & $q^4 +1$ & $2.\Omega_9(q)$ & $H.4$ & $4$ & $  a$ & $q$ odd, \cite[Table 5.1]{LiebeckSaxlSeitz} , \cite[Table 7]{Craven2023}\\ \hline
    \end{tabular}
    \caption{Choices for a cyclic subgroup $H$ of exceptional groups $T$ when $N_T(H)$ is not maximal, but $N_T(H) < M$ for a unique class of maximal $M$,   $q=p^a$ with $p$ prime.} 
    \label{tab:hard choices for H for some exceptionals}
\end{table}

\begin{lemma}
\label{lem: hard for exceptional}
For each simple group $T$ in Table~\ref{tab:hard choices for H for some exceptionals}, there exists a cyclic subgroup $H$ of  the claimed order. Let  $s$ be a prime dividing $|H|$, let $H_s$ be the Sylow $s$-subgroup of $H$ and let $z$ be the corresponding value in Table~\ref{tab:hard choices for H for some exceptionals}. Then one of the following holds:
\begin{enumerate}[$(1)$]
    \item $H_s$ is characteristic in $N_T(H)$, $N_{\Aut(T)}(H)=N_{\Aut(T)}(H_s)$ and $|N_{\Aut(T)}(H) : C_{\Aut(T)}(H_s)|$ divides $z|\Out(T)|$, or
    \item $T=G_2(q)$,  $q\equiv 1 \pmod{3}$, $s=3$ and   $|H_3| = 3$, or
    \item $T=F_4(q)$,  $s=2$   and    $|H_2| =2$.
\end{enumerate}
%
%

%
\end{lemma}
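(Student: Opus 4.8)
The plan is to follow the template of Lemma~\ref{lem: easy for exceptional}, the only genuine new ingredient being that the maximality of $N_{\Aut(T)}(H)$ is no longer available to force $N_{\Aut(T)}(H)=N_{\Aut(T)}(H_s)$; this is exactly the point at which the column recording the unique maximal overgroup $M$ will be used. First I would record, from the references in the Notes column, the existence of the cyclic $H$ of the stated order, the structure $N_T(H)=H.Z$ with $Z$ cyclic of order $z$, and the fact that $M$ is maximal in $T$ with $N_T(H)<M$, where $M$ lies in a unique $T$-conjugacy class of subgroups containing $N_T(H)$. Since $H$ is a self-centralising maximal torus we have $C_T(H)=H$, so $Z\cong N_T(H)/H$ acts faithfully on the cyclic group $H$. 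Now fix a prime $s$ dividing $|H|$ and split according to whether $s\mid z$.

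Suppose first that $s\nmid z$. Then $H_s=O_s(N_T(H))$ is characteristic in $N_T(H)$, and since $N_T(H)\trianglelefteq N_{\Aut(T)}(H)$ this already gives $N_{\Aut(T)}(H)\leqslant N_{\Aut(T)}(H_s)$, exactly as in Lemma~\ref{lem: easy for exceptional}. For the reverse inclusion I would argue that $H_s$ determines $H$. Because $s\nmid z$ and $s$ is a primitive prime divisor (of $q^3-1$ for $G_2(q)$ and of $q^8-1$ for $F_4(q)$), every nontrivial element of $H_s$ is regular semisimple with centraliser the maximal torus $H$; equivalently, $Z$ acts on the cyclic $H$ through the maps $x\mapsto x^{\pm q^i}$ and no nontrivial such map fixes $H_s$ pointwise (as $s\nmid q^j-1$ for the relevant $j<3$, resp.\ $j<8$), whence $C_Z(H_s)=1$. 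To turn this into a statement inside the simple group $T$ I would use the uniqueness of $M$: the proper subgroup $N_T(H_s)\geqslant N_T(H)$ lies in some maximal subgroup, which must be a conjugate $M_0$ of $M$ since it contains $N_T(H)$; then $C_T(H_s)\leqslant N_T(H_s)\leqslant M_0$, and an internal centraliser computation in $M_0\cong M$ (a Singer-cycle calculation in $\SL(3,q).2$, resp.\ a torus calculation in $2.\Omega_9(q)$) gives $C_{M_0}(H_s)=H$, hence $N_{M_0}(H_s)=N_{M_0}(H)=N_T(H)$, the last equality holding because $N_T(H)\leqslant M_0$. Therefore $C_T(H_s)=H$ and $N_T(H_s)=N_T(H)$. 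Finally any $g\in N_{\Aut(T)}(H_s)$ satisfies $H^g=C_T(H_s)^g=C_T(H_s)=H$, so $N_{\Aut(T)}(H_s)\leqslant N_{\Aut(T)}(H)$, giving equality. The index bound in statement~(1) then follows as before: $C_{\Aut(T)}(H_s)\cap T=C_T(H_s)=H$, so $N_T(H)\,C_{\Aut(T)}(H_s)/C_{\Aut(T)}(H_s)\cong N_T(H)/H$ has order $z$, and the remaining quotient embeds in $\Out(T)$, whence $|N_{\Aut(T)}(H):C_{\Aut(T)}(H_s)|$ divides $z|\Out(T)|$.

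It remains to treat the primes $s\mid z$, which are precisely those producing exceptions~(2) and~(3). For $T=G_2(q)$ we have $z=6$: the prime $2$ never divides the odd number $|H|=q^2+q+1$, while $3\mid|H|$ exactly when $q\equiv1\pmod 3$, in which case $q^2+q+1\equiv3\pmod 9$ forces $|H_3|=3$, which is statement~(2); if $q\not\equiv1\pmod3$ then no prime dividing $z$ divides $|H|$ and there is nothing to prove. For $T=F_4(q)$ with $q$ odd we have $z=4$ and the only relevant prime is $s=2$; here $q^4+1\equiv2\pmod{16}$ gives $|H_2|=2$, which is statement~(3).

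I expect the main obstacle to be the reverse inclusion $N_{\Aut(T)}(H_s)\leqslant N_{\Aut(T)}(H)$ when $s\nmid z$, since this is exactly the step at which the maximality argument of Lemma~\ref{lem: easy for exceptional} breaks down. The crux is to recover $H$ from $H_s$ as $C_T(H_s)$, and the cleanest way to control $C_T(H_s)$ and $N_T(H_s)$ inside $T$ is to confine them into a conjugate of the unique maximal overgroup $M$ and compute there. Verifying that the $s$-elements are regular, equivalently that the $Z$-action on $H_s$ is faithful, is the technical heart; it is precisely the failure of this regularity for the small primes dividing $z$ that produces the two listed exceptions.
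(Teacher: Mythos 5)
The genuine gap is in your first step. You propose to ``record from the references'' both the structure $N_T(H)=H.Z$ with $|Z|=z$ and the statement that every maximal subgroup of $T$ containing $N_T(H)$ is conjugate to $M$; your confinement of $N_T(H_s)$ into a conjugate $M_0$ of $M$ rests entirely on these. Neither fact is in the cited references, which provide only the maximality of $M$ and complete lists of maximal subgroups; since $N_T(H)$ is precisely \emph{not} maximal here, no reference hands you its structure or a classification of its overgroups. Establishing exactly these facts is the bulk of the paper's proof (its ``Claim'' that $N_T(H)=N_M(H)$, with the structure displayed in Table~\ref{tab:hard choices for H for some exceptionals}): one takes a primitive prime divisor $t$ of $q^3-1$ (resp.\ $q^8-1$) dividing $|H|$; passes to a subfield group $T_0$ over a minimal field $q_0$ with $N_T(H)\leqslant T_0$ --- the step that disposes of subfield subgroups, which your proposal never addresses and which, for $G_2(q)$ with $q$ a square, genuinely do contain elements of order $t$; then uses $t\mid |L|$ together with the maximal subgroup lists to cut the candidate overgroups $L$ down to Tables~\ref{tab: cases for L for g2} and~\ref{tab: cases for L for f4}; and finally eliminates every candidate except $\SL(3,q_0){:}2$, resp.\ $2.\Omega_9(q_0)$, by comparing orders of cyclic subgroups, which simultaneously forces $q=q_0$. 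Moreover, your uniqueness hypothesis is not merely unproved but false in a boundary case: for $T=G_2(3)$ the subgroup $N_T(H)=13{:}6$ lies in maximal subgroups from two distinct classes, $\SL(3,3){:}2$ and $\PSL(2,13)$, a case the paper's analysis catches and treats separately. So as written, your proposal assumes what the paper actually proves, and the confinement step has no foundation until that analysis (or an equivalent one) is supplied.

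That said, once the Claim is granted, the rest of your argument is sound and at one point sharper than the paper: you correctly identify that, with maximality of $N_{\Aut(T)}(H)$ unavailable, the reverse inclusion $N_{\Aut(T)}(H_s)\leqslant N_{\Aut(T)}(H)$ requires a real argument, and your route via faithfulness of the $Z$-action on $H_s$ for primitive prime divisors $s$, a centraliser computation inside $M_0$, and the deduction $C_T(H_s)=H$, supplies precisely the justification the paper elides when it asserts $N_{\Aut(T)}(H)=N_{\Aut(T)}(H_s)$ directly from $H_s$ being characteristic in $N_T(H)$ (characteristicity alone gives only the forward inclusion, and the maximality that rescued Lemma~\ref{lem: easy for exceptional} is absent here). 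Your index bound and your congruence treatment of the exceptional primes ($s=3$ for $G_2(q)$ with $q\equiv 1\pmod 3$, and $s=2$ for $F_4(q)$ with $q$ odd) agree with the paper's.
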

\begin{proof}
   The proof is similar to that of Lemma~\ref{lem: easy for exceptional}, with adjustments since   $N_T(H)$ is not maximal in $T$. Let $T=G_2(q)$, or $T=F_4(q)$ with $q$ odd. For each row of Table~\ref{tab:hard choices for H for some exceptionals}, take $H$ to be a cyclic subgroup of the specified order inside the specified maximal subgroup $M$ of $T$  (the maximality of $M$ in $T$ is justified by the references in the `Notes' column of Table~\ref{tab:hard choices for H for some exceptionals}). 

 \medskip\noindent
 \textit{Claim:}\quad $N_T(H) = N_M(H)$, and  the structure of the normaliser is as displayed in the respective column of Table~\ref{tab:hard choices for H for some exceptionals}. 

 \medskip\noindent
 \textit{Proof of Claim:} \quad Suppose first that $T=G_2(q)$, and note that $|H|$ has order divisible by a prime $t$ that is a primitive prime divisor    of $q^3-1$ (and  therefore $t \geqslant 5$). Choose  $q_0=p^b$ with $b$ a minimal divisor of $a$ such that $N_T(H) \leqslant G_2(q_0)=:T_0$.  Now, since $T_0$ contains no normal cyclic subgroups, we have $N_T(H) \leqslant L$, for some maximal subgroup $L$  of $T_0$ (and note that $L$ cannot  be a  subfield subgroup by   the  minimality of $q_0$). From the references in the Notes column, we see that $L$ and $t$ are in Table~\ref{tab: cases for L for g2}.
 \begin{table}[h]
     \centering
     \begin{tabular}{c|c | c |c | c}
     \hline
          $L$ & $q_0$ & $t$ & Notes & $q_0^2+q_0+1 \geqslant $ \\ \hline
          $2^3.\PSL(3,2)$ & $p$ & $7$ & $p$ odd &$13$ \\
           $\PSL(2,13)$ & $4$ & $7$ & & $21$ \\
            $J_2$ & $4$ & $7$  & & $21$ \\
            $J_1$ & $11$ & $7,19$ & & $133$\\
            $\PSL(2,13)$ & $q_0=p$  & $7,13$ &   $p\equiv 1,3,4,9,10,12 \pmod{13}$ & $13$\\
            $\PSL(2,13)$ & $q_0=p^2$  & $7,13$ &  $p\equiv 2,5,6,7,8,11 \pmod{13}$  & $21$\\
            $\PSL(2,8)$ & $q_0=p$  & $7$ &   $p\equiv 1,8 \pmod{9}$ & $307$\\
            $\PSL(2,8)$ & $q_0=p^3$  & $7$ &  $p\equiv 2,4,5,7 \pmod{9}$ & $73$  \\
            $\PSU(3,3):2$ & $q_0=p$  & $7$ &   $p\geqslant 5$ & $31$\\
            $\SL(3,q_0):2$ &    & $t$ &    \\ \hline
     \end{tabular}
     \caption{Possibilities for $L$ with $N_T(H) \leqslant L$ for $T=G_2(q)$}
     \label{tab: cases for L for g2}
 \end{table}
Now $H$ is cyclic of order $q^2+q+1 \geqslant q_0^2+q_0+1$, and on comparing the entry for $L$ in Table~\ref{tab: cases for L for g2} with the bound in the last column of that table, we see that $L$  contains a cyclic subgroup of order  $|H|$ only if either $L=\PSL(2,13)$, $t=13$ and $q=q_0=p=3$, or $L=\SL(3,q_0):2$. In the first case, we have $|H|=13=q^2+q+1$ and  $N_{\PSL(2,13)}(H) = H:6$, as in Table~\ref{tab:hard choices for H for some exceptionals}. In the latter case, we have   $|N_{\SL(3,q_0):2}(H)| = (q_0^2+q_0+1).6 \leqslant  | N_M(H)| $, hence $q=q_0$ and $N_T(H) = N_M(H)=H.6$, again as in Table~\ref{tab:hard choices for H for some exceptionals}.

 Suppose now that $T=F_4(q)$, where $q$ is odd, and note that there is a primitive prime divisor $t$ of $q^8-1$ that divides $|H|$ (from which it follows  that  $t \geqslant 11$). As above, choose $q_0=p^b$ with $b$ a minimal divisor of $a$ such that $N_T(H) \leqslant F_4(q_0)=:T_0$, and let  $L$ be a maximal subgroup of $T_0$ containing $N_T(H)$. Then $t $ divides $|L|$, and so from   \cite[Table 1]{Craven2023} and  \cite[Table 7]{Craven2023}  we see that $L$ and $t$ are as in Table~\ref{tab: cases for L for f4}.
 \begin{table}[ht]
     \centering
     \begin{tabular}{c|c }
     \hline
         $L$ & $t$   \\ \hline 
         $\PGL(2,13)$ & $13$  \\
         $\PSL(2,17)$ & $17$  \\
         $\PSL(2,25).2$ & $13$  \\
         $\PSL(2,27).(3)$ & $13$  \\
         $3^3 \rtimes \SL(3,3)$ & $13$ \\
         $2.\Omega_9(q_0)$ & \\ \hline
     \end{tabular}
     \caption{Possibilities for $L$ with $N_T(H) \leqslant L$ for $T=F_4(q)$}
     \label{tab: cases for L for f4}
 \end{table}
 
For the groups in the first $5$ rows of Table~\ref{tab: cases for L for f4}, we have that a subgroup of $L$ of  order $t$ is self-centralising, and thus $|H|=t=q^4+1$. Now, since $q$ is odd, we have $|H|=t=q^4+1 \geqslant q_0^4+1 \geqslant 82$, a contradiction. Hence the  only possibility is that 
$L \cong 2.\Omega_9(q_0)$,  and comparing the orders of $N_{2.\Omega_9(q_0)}(H)$ and $N_M(H)$, we have $q=q_0$  and $N_T(H) = N_M(H)=H.4$ as in Table~\ref{tab:hard choices for H for some exceptionals}. This proves the claim. \hfill $\blacksquare$
    
    


We now turn to the statements of the lemma.    First consider the case $T=G_2(q)$. Then $s$ is coprime to $z$, unless $s=3$ and $q\equiv 1 \pmod {3}$. If $s=3$ and $q\equiv 1 \pmod{3}$, then $q^2+q+1\equiv 3 \pmod{9}$, and so  $|H|_3= (q^2+q+1)_3 = 3$, and statement (2) of the lemma holds. Suppose now that if $s=3$ holds, then $q\not\equiv 1\pmod{3}$. Then $|H|=q^2+q+1$ is coprime to $3$ and odd, so $O_s(N_T(H) ) = H_s$ is a characteristic subgroup of $N_T(H)$, and thus $N_{\Aut(T)}(H)=N_{\Aut(T)}(H_s)$. Arguing similarly to  Lemma~\ref{lem: easy for exceptional}, we see that statement (1) of the lemma holds.

    Consider now the case $T=F_4(q)$. Since $q$ is odd, we have that $q\equiv \pm 1\pmod{4}$, and hence $q^4+1 \equiv 2 \pmod{4}$. Thus $|H|_2=2$ and part (3) of this lemma holds if $s=2$. If $s$ is odd, then $s$ is coprime to $z$ and hence $O_s(N_T(H))=H_s$ satisfies part (1) of the lemma, similarly to the previous cases. 
\end{proof}

\begin{prop}\label{prop:excep}
There is an increasing integer function 
$h$ such that, if $T$ is a simple exceptional  group  of Lie Type, and $T$ is pp-bounded by $n$, then $|T|<h(n)$.
\end{prop}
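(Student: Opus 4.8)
The plan is to mirror the strategy used in Proposition~\ref{prop:class}, now exploiting the two preparatory lemmas just established. For each simple exceptional group $T$ of Lie type over $\mathbb{F}_q$ with $q=p^a$, Tables~\ref{tab:easy choices for H for some exceptionals} and~\ref{tab:hard choices for H for some exceptionals} supply a cyclic subgroup $H$ together with control of its normaliser, and Lemmas~\ref{lem: easy for exceptional} and~\ref{lem: hard for exceptional} tell us that for each prime $s\mid |H|$ the Sylow subgroup $H_s$ (or a closely related characteristic cyclic subgroup $Y_2$) satisfies $N_{\Aut(T)}(H)=N_{\Aut(T)}(H_s)$ with index $r:=|N_{\Aut(T)}(H):C_{\Aut(T)}(H_s)|$ dividing $z\,|\Out(T)|$. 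The first step is therefore to invoke Lemma~\ref{lem: bounding phi s}: since $T$ is pp-bounded by $n$, the number of $\Aut(T)$-classes of elements of order $|H_s|$ is $\phi(|H_s|)/r\leqslant n$, whence $\phi(|H_s|)=|H_s|(s-1)/s$ divides $r\cdot n!$, and $r$ divides $z\,|\Out(T)|$ with $z\leqslant 30$ and $|\Out(T)|\leqslant 2ae$ (with $e\leqslant 3$) in all rows. Thus $\phi(|H_s|)$ divides $c(n)\cdot a$ for a suitable function $c(n)$, uniformly in $s$.

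The second step converts these divisibility facts into a bound on $|H|$ itself, exactly as in the classical case. Writing $|H|$ as the product of its Sylow orders $|H_s|=s^{b}$, each satisfies $|H_s|=s\cdot\phi(|H_s|)/(s-1)<2\phi(|H_s|)\leqslant 2c(n)a$, except for the finitely many exceptional primes flagged in parts (2) and (3) of the two lemmas, where $|H_s|$ is bounded by an absolute constant (at most $2$ or $3$). Since each such $|H_s|$ divides $c(n)a$, the number of primes $s\mid |H|$ is at most the number of divisors of $c(n)a$, which is less than $2\sqrt{c(n)a}$ by \cite[Section 8.3]{nzm}; hence
\[
|H|\leqslant (2c(n)a)^{2\sqrt{c(n)a}}\leqslant c'(n)^{\sqrt{a}\log a}
\]
for a suitable $c'(n)$, by the same manipulation isolating the $\sqrt{a}$ and $\log a$ factors as in Proposition~\ref{prop:class}. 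The third step is to record that in every row of the two tables $|H|$ grows at least like a fixed positive power of $q$ — for instance $|H|\geqslant q^2$ for $G_2(q)$, $E_6$, ${}^2E_6$, and $|H|$ is a polynomial of degree $4$ or $8$ in $q$ for $F_4$, ${}^3D_4$, $E_8$, while for $E_7$ one has $|H|\geqslant (q-1)/2$. In each case $|H|>q^{1/2}=p^{a/2}$ say, so combining with the upper bound gives $p^{a/2}<c'(n)^{\sqrt{a}\log a}$; taking logarithms yields $(\sqrt{a}\log p)/\log a\leqslant 2\log c'(n)$, and Lemmas~\ref{lem: sqrtx over log x} and~\ref{lem: log bound} then bound both $a$ and $p$, hence $q$, by explicit functions of $n$.

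Finally, since the exceptional groups of Lie type form a bounded list of Dynkin types, once $q$ is bounded in terms of $n$ the order $|T|$ is itself bounded by a function of $n$; taking $h$ to be the maximum of the resulting bounds over the finitely many types, and enlarging it to be increasing, completes the proof. The main obstacle I anticipate is not the numerical machinery, which is a verbatim repeat of the classical argument, but rather the careful bookkeeping in the few degenerate cases where the chosen prime divides $z$: namely $E_7(q)$ with $q$ odd and $s=2$ (handled via the characteristic subgroup $Y_2$ of order at least $|H|_2/2$ from Lemma~\ref{lem: easy for exceptional}(2), so that at most a factor of $2$ is lost), $G_2(q)$ with $q\equiv 1\pmod 3$ and $s=3$, and $F_4(q)$ with $q$ odd and $s=2$. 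In each of these the relevant $|H_s|$ is a bounded constant, so it contributes only an absolute factor to $|H|$ and does not affect the asymptotic lower bound $|H|>q^{1/2}$; one must simply check that excluding these primes from the product still leaves enough of $|H|$ to exceed a power of $q$, which holds because their combined contribution is at most a constant.
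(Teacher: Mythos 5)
Your proposal is correct and follows the paper's strategy very closely: the same cyclic subgroups $H$ from Tables~\ref{tab:easy choices for H for some exceptionals} and~\ref{tab:hard choices for H for some exceptionals}, the same use of Lemmas~\ref{lem: easy for exceptional} and~\ref{lem: hard for exceptional} to get $N_{\Aut(T)}(H)=N_{\Aut(T)}(H_s)$ with index dividing $z|\Out(T)|$, the same application of Lemma~\ref{lem: bounding phi s} with divisor counting to get $|\pi(H)|<2\sqrt{c(n)a}$, and the same special treatment of the degenerate primes ($Y_2$ for $E_7(q)$ with $q$ odd, $s=3$ for $G_2$, $s=2$ for $F_4$). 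The one genuine divergence is the final assembly. You bound $|H|=\prod_s|H_s|\leqslant (2c(n)a)^{2\sqrt{c(n)a}}\leqslant c'(n)^{\sqrt{a}\log a}$ and rerun the $(\sqrt{a}\log p)/\log a$ argument of Proposition~\ref{prop:class}; the paper instead writes $|H|\leqslant |\pi(H)|\cdot\max_s|H_s|\leqslant 120an\sqrt{360a(n!)}$, deduces $q/a^{3/2}\leqslant f(n)$, and finishes with Lemma~\ref{lem: log bound}. Your multiplicative estimate is actually the safer one: a product of Sylow orders is in general bounded only by $\bigl(\max_s|H_s|\bigr)^{|\pi(H)|}$, not by $|\pi(H)|\cdot\max_s|H_s|$ (compare $2\cdot3\cdot5$ with $3\cdot 5$), so the paper's linear bound needs repair precisely along the lines you give, at the cost of a weaker (but still sufficient) function $h$. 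Two small points to tidy in your write-up. First, the blanket claim ``$|H|>q^{1/2}$ in each case'' fails for $E_7(q)$ with $q\leqslant 5$, where $|H|=(q\pm1)/d$ can be $1$ or $2$; either absorb these finitely many groups into $h(n)$ or argue from $|H|\geqslant (q-1)/2$ directly (and note the table hypotheses force $q>2$ in some rows, as the paper points out). Relatedly, $(q^2\pm q+1)/3<q^2$, so the parenthetical ``$|H|\geqslant q^2$ for $E_6$, ${}^2E_6$'' is overstated, though harmless since only the weaker lower bound is used. Second, in your step bounding the Sylow factors you describe all of parts (2) and (3) of the two lemmas as giving $|H_s|$ bounded by an absolute constant; that is true for Lemma~\ref{lem: easy for exceptional}(3) and Lemma~\ref{lem: hard for exceptional}(2),(3), but in Lemma~\ref{lem: easy for exceptional}(2) the $2$-part $|H|_2$ is only bounded via $|H|_2\leqslant 2|Y_2|\leqslant 16an$ --- your final paragraph handles this correctly, so just make that treatment the operative one throughout.
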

\begin{proof}
Let $T$ be a simple exceptional group of Lie type defined over the field with $q$ elements. Note that since the rank of $T$ is bounded by a constant, we simply need to prove that $q$ is bounded by a function of $n$. In particular, we may assume that $q>2$ so that we  may  use all rows of Table~\ref{tab:easy choices for H for some exceptionals}.

\medskip\noindent
\emph{Case a): $T$ is one of the groups appearing in Table~\ref{tab:easy choices for H for some exceptionals}.} We apply Lemma~\ref{lem: easy for exceptional} and consider the outcomes in turn.  Let $H$ be the cyclic subgroup of $T$ defined in Lemma~\ref{lem: easy for exceptional} and let $\pi(H)$   be the set of prime divisors of $|H|$. For $s\in \pi(H)$ let $H_s$ be the Sylow $s$-subgroup of $H$. Then, in all cases, we have  
\begin{equation}
\label{eqn: q bounded and |H|}
    \frac{q-1}{2}  \leqslant |H| \quad \text{ and } \quad |H| = \prod_{s \in \pi(H)}|H_s|.
\end{equation}

\medskip\noindent
\emph{Case: a)i): if $T=E_7(q)$, we assume that $q$ is even.}
Now $H_s$ is cyclic and $|H_s|=s^b$ for some $b\geqslant   1$. Let $\mathcal C(s)$ be the set of $\Aut(T)$-classes of elements $g\in T$ such that $g^{\Aut(T)} \cap H \neq \emptyset$ and $|g|=s^b$. Since $T$ is pp-bounded by $n$, it follows from Lemma~\ref{lem: bounding phi s} that $|\mathcal C(s)| $ divides $z|\Out(T)|n!$, which implies that $(s-1)$ divides $360a(n!)$ (using the  lowest common multiple of the entries in the $z$ column of Table~\ref{tab:easy choices for H for some exceptionals}). Since the number of divisors of $360a(n!)$ is at most $2\sqrt{360an!}$, and different values of $s$ give distinct divisors, we have that 
\begin{equation}
\label{eqn: |Pi(H)}
|\pi(H)| \leqslant 2\sqrt{360a(n!)}.
\end{equation} Furthermore, for each $s\in \pi(H)$,  Lemma~\ref{lem: bounding phi s} combined with the value of $|N_{\Aut(T)}(H):C_{\Aut(T)}(H)|$ from Lemma~\ref{lem: easy for exceptional} yields
\begin{equation}
\label{eqn: |H_s|}
    |H_s| = s^b = s^{b-1}(s-1) \left (\frac{s}{s-1}\right ) = \phi(|H_s|) \left (\frac{s}{s-1}\right ) \leqslant 2 z|\Out(T)|n \leqslant 60 a  n.
\end{equation}
Thus putting \eqref{eqn: q bounded and |H|}, \eqref{eqn: |Pi(H)} and \eqref{eqn: |H_s|} together, we have
$$\frac{q}{4} \leqslant \frac{q-1}{2} \leqslant |H| \leqslant |\pi(H)| \max_{s\in \pi(H)}(|H_s|) \leqslant 120 a   n \sqrt{360a(n!)}.$$
This gives
$$\frac{q}{a^{3/2}} \leqslant f(n)$$
with $f(n) = 480n\sqrt{360(n!)}$. By Lemma~\ref{lem: log bound},  
$$\frac{q}{a^{3/2}} = \left (\frac{q}{a} \right )^{3/2}q^{-1/2} \geqslant \left (\frac{\log(2)}{2}q^{1/2}\right )^{3/2}q^{-1/2} = C q^{1/4},$$
for a constant $C$. Thus we obtain $ q^{1/4}  \leqslant  f(n)/C$, which shows that $q$ is bounded by a function of $n$, and hence $|T| \leqslant h_1(n)$, for a computable function $h_1$.

\medskip\noindent
\emph{Case a)ii):  $T=E_7(q)$ and $q$ is odd.} From parts (2.)~and (3.)~of Lemma~\ref{lem: easy for exceptional} we have 
$$
|H| = |H_2|\left (\prod_{s\in \pi(H), \ s \text{ odd}}|H_s|\right) \leqslant \begin{cases}
2\left ( \prod_{s\in \pi(H), \ s \text{ odd}}|H_s|\right) & \text{ if } |H_2|=2,\\
2|Y_2|\left ( \prod_{s\in \pi(H), \ s \text{ odd}}|H_s|\right) & \text{ if } |H_2|>2.
\end{cases}
$$
Since $T$ is pp-bounded by $n$, an identical argument to that given above shows that $|\pi(H)| \leqslant  2\sqrt{z|\Out(T)|(n!)} \leqslant 2\sqrt{4a(n!)}$. Further, for  $s \in \pi(H)$ with $s$ odd, since $T$ is pp-bounded by $n$, Lemma~\ref{lem: bounding phi s} and Lemma~\ref{lem: easy for exceptional} give -- exactly as above -- $|H_s| \leqslant z|\Out(T)|n \leqslant 4an$. For $s=2$, suppose that $|H_2| > 2$, so that Lemma~\ref{lem: easy for exceptional}(2.) holds. Then  Lemma~\ref{lem: bounding phi s} gives $|Y_2|/2 = \phi(|Y_2|) \leqslant  z|\Out(T)|n = 4an$, so that $|Y_2| \leqslant 8an$. Thus 
$$|H| \leqslant 2(8an)\left (2\sqrt{4a(n!)}\right )$$
and since $|H| \geqslant   \frac{q-1}{2}$, we obtain 
$$\frac{q-1}{a^{3/2}} \leqslant 64  n\sqrt{4(n!)}.$$
Similarly to as above, we have $q$ is bounded by a function of $n$, and hence $|T| \leqslant h_2(n)$, for a computable function $h_2$.

\medskip\noindent
\emph{Case b): $T$ appears in Table~\ref{tab:hard choices for H for some exceptionals}.} Here $T=G_2(q)$ or $T=F_4(q)$ with $q$ odd. We apply Lemma~\ref{lem: hard for exceptional} and consider the outcomes in turn.  Let $H$ be the cyclic subgroup of $T$ defined in Lemma~\ref{lem: easy for exceptional} and let $H_s$ be the Sylow $s$-subgroup of $H$ and let $\pi(H)$   be the set of prime divisors of $|H|$.

From parts (2.)~and (3.)~of Lemma~\ref{lem: hard for exceptional}, we may write:
$$|H|=\begin{cases}(3,q-1) \left ( \prod_{s\in\pi(H),\ s\neq 3}|H_s|\right) & \text{if } T=G_2(q)  \\
2 \left ( \prod_{s\in\pi(H),\ s\neq 2}|H_s|\right) & \text{if } T=F_4(q) \end{cases}$$
Now for any $s\in \pi(H)$ such that $(T,s)\ne (G_2(q),3)$ or $(F_4(q),2)$,  
Lemma~\ref{lem: hard for exceptional}(1.) applies, and we argue similarly to the previous cases to obtain $|\pi(H)| \leqslant 2\sqrt{12an!}$. Further, we have $|H_s| \leqslant 2z|\Out(T)|n \leqslant 24an$. This gives
$$q \leqslant |H|/3 \leqslant 2\sqrt{12an!}(24an)$$
and therefore $|T|$ is bounded by a computable  function $h_3(n)$.

\medskip
Finally, we define $h(n)$ to be the maximum over the functions $h_1$, $h_2$ and $h_3$, and thus in any of the cases above, we have $|T| \leqslant h(n)$, as required.
\end{proof}

\section{Proof of Theorem~\ref{t:main}}
 \label{sec:proof}

We set 
\[f(n) = \max \left \{  \frac{1}{2}(3n+2)!,\ g(n),\ h(n) \right \} + |M| \]
where $g(n)$ is the function in Proposition~\ref{prop:class}, $h(n)$ is the function in Proposition~\ref{prop:excep} and $M$ is the Monster sporadic simple group. It is clear from the functions $g$ and $h$ that $f$ is an increasing function. Let  $T$ be a nonabelian finite simple group. If $T$ is sporadic, then $|T| \leqslant |M| \leqslant f(n)$. If $T$ is alternating, classical or an exceptional group of Lie type, then Lemma~\ref{lem: alt}, Proposition~\ref{prop:class} and Proposition~\ref{prop:excep}, respectively, shows that $|T| \leqslant f(n)$. This completes the proof.



\begin{thebibliography}{abc} 

\bibitem{aschseitz}
M. Aschbacher and G.~M.~Seitz,  
\newblock Involutions in {C}hevalley groups over fields of even order.
 Nagoya Math. J., {\bf 63} (1976), 1--91.


\bibitem{BMV}
B. Baumeister, A Mar\'{o}ti, and H.~P.~Tong-Viet,
Finite groups have more conjugacy classes. \emph{Forum Math.} {\bf29} (2017), 259--275.

\bibitem{Br63}
R.~Brauer, Representations of finite groups, in \emph{Lectures on modern mathematics, Vol. I} (ed. T. L.
Saaty), Wiley, New York, 1963.

\bibitem{Burness}
T.~C.~Burness, Fixed point ratios in actions of finite classical groups, II. \emph{J. Algebra}, \textbf{309} (2007) 80--138.


\bibitem{BGbook}
 T.~C. Burness, and M.~Giudici, \emph{Classical Groups, Derangements and Primes}
Australian Mathematical Society Lecture Series, No.~25 
Cambridge University Press, 2015.




\bibitem{Cooperstein81}
B.~N.~Cooperstein. Maximal Subgroups of {$G_2(2^n)$}.
\emph{J. Algebra}, {\bf 70} (1981), 23--36.

\bibitem{Craven2022}
D.~A. Craven, The maximal subgroups of   {$E_7(q)$}   and related almost simple groups, (2022), arXiv:2201.07081.

\bibitem{Craven2023}
D.~A. Craven, The maximal subgroups of the exceptional groups {$F_4(q)$}, {$E_6(q)$} and {$ {}^2E_6(q)$} and related almost simple groups, \emph{Invent. Math}, \textbf{234} (2023), 637--719.


\bibitem{DLO}
G.~De Franceschi and M.~W.~Liebeck and E.~A.~O'Brien, Conjugacy in finite classical groups, (2024) arXiv:2401.07557. 


\bibitem{FKS}
B.~Fein, W.~M.~Kantor and M.~Schacher,
Relative Brauer groups. II., \emph{J. Reine Angew. Math.} {\bf328} (1981), 39--57.

\bibitem{GY24}
Y.~Gao and Y.~Yang, Lower bound for the number of conjugacy classes and irreducible characters, 
\emph{Monatsh. Math.} {\bf203} (2024), 341--355.

\bibitem{GMP}
M.~Giudici, L.~Morgan and C.~E.~Praeger,
Simple group orders and prime power coverings of groups, \emph{Preprint} (2024).

\bibitem{GueP}
 S.~Guest and C.~E.~Praeger, Proportions of elements with given $2$-part order in finite classical groups of odd characteristic. \emph{J. Algebra} \textbf{372} (2012), 637--660. 

\bibitem{G90}
R.~M.~Guralnick,
Zeros of permutation characters with applications to prime splitting and Brauer groups, \emph{J. Algebra} {\bf131} (1990), 294--302.


\bibitem{HethKul} 
L.~H\'ethelyi and B.~K\"ulshammer, 
Elements of prime power order and their conjugacy classes in finite groups, \emph{J. Aust. Math. Soc.} \textbf{78} (2005) 291--295.



\bibitem{HKY21}
N.~N.~Hung, T.~M.~Keller and Y.~Yang, 
A lower bound for the number of odd-degree representations of a finite group. \emph{Math. Z.} {\bf298} (2021),  1559--1572.

\bibitem{Hu}
B.~Huppert, \newblock \emph{Endliche {G}ruppen. {I}},
{Die Grundlehren der Mathematischen Wissenschaften, Band 134},
\newblock {Springer-Verlag, Berlin-New York},
{1967}.

\bibitem{Za}
A.~Jaikin-Zapirain, On the number of conjugacy classes of finite nilpotent groups. \emph{Adv. Math.} {\bf227}
(2011), 1129--1143.


\bibitem{K11}
T.~M.~Keller, 
Finite groups have even more conjugacy classes, \emph{Israel J. Math.} {\bf181} (2011), 433--444.

\bibitem{Kleidman3D4}
P.~B.~Kleidman, The maximal subgroups of the Steinberg triality groups ${}^3D_4(q)$ and of their automorphism groups, \emph{J.~Algebra} \textbf{115} (1988), 182--199.

\bibitem{KleidmanG2}
P.~B.~Kleidman, The maximal subgroups of
the Chevalley groups $G_2(q)$ with $q$ odd, the Ree groups ${^2}G_2(q)$, and
their automorphism groups, \emph{J.~Algebra} {\bf117} (1988), 30--71.

\bibitem{KL}
P.~Kleidman, M.~Liebeck,
\emph{The subgroup structure of the finite classical groups},
  {London Mathematical Society Lecture Note Series},   \textbf{129}, {Cambridge University Press, Cambridge}, {(1990)}.


\bibitem{La}
E.~Landau, Über die klassenzahl der bin\"{a}ren quadratischen formen von negativer discriminante, \emph{Math. Ann.} {\bf56} (1903), 671--676.  

\bibitem{LiebeckSaxlSeitz}
M.~W.~Liebeck, J.~Saxl and G.~M.~Seitz,
Subgroups of maximal rank in finite exceptional groups of Lie type, \emph{Proc. London Math. Soc.} (3) {\bf65} (1992), 297--325.  

\bibitem{lieseitzunipot}
M.~W.~Liebeck, and G.~M.~Seitz, 
\newblock {\em Unipotent and nilpotent classes in simple algebraic groups and
  {L}ie algebras}.
\newblock Math. Surveys Monogr., vol. 180.
\newblock Amer. Math. Soc., Providence, RI., 2012.

\bibitem{Malle91}
G.~Malle. The maximal subgroups of {${}^2F_4(q^2)$}. \emph{J. Algebra}, {\bf 139}, 52--69, (1991).

\bibitem{MNO} 
G.~Malle, G.~Navarro, J.~B.~Olsson, Zeros of characters of finite groups, \emph{J. Group Theory} \textbf{3} (2000), 353–-368.

\bibitem{MN16}
A.~Moret\'{o}, and H.~N.~Nguyen,
Variations of Landau's theorem for $p$-regular and $p$-singular conjugacy classes. \emph{Israel J. Math.} {\bf212} (2016),  961--987.

\bibitem{NeuP}
P.~M.~Neumann and C.~E.~Praeger. 
A recognition algorithm for special linear groups,  \emph{Proc. London Math. Soc.(3)} \textbf{65} (1992), 555--603.


\bibitem{NieP}
A.~C.~Niemeyer and C.~E.~Praeger. Estimating proportions of elements in finite groups of Lie type,  \emph{J. Algebra} \textbf{324} (2010), 122--145.  

\bibitem{nzm}
I.~Niven, H.~S.~Zuckerman and H.~L.~Montgomery, \emph{An introduction 
to the theory of numbers}, Wiley, New York, Fifth Edition, 1991.





\bibitem{kron}
C.~E.~Praeger, Kronecker classes of fields and covering
subgroups of finite groups,  \emph{J. Aust. Math. Soc.} \textbf{57} (1994), 17--34.


\bibitem{Py92}
L.~Pyber, Finite groups have many conjugacy classes, \emph{J. London Math. Soc. (2)} {\bf 46} (1992), 239--249. 



\bibitem{R}
P.~Ribenboim, \emph{The little book of bigger primes.}
Second edition, Springer-Verlag, New York, 2004.

\bibitem{Suz62}
M.~Suzuki. On a class of doubly transitive groups. \emph{Ann. Math.} Vol.~75. No.~1. (1962), 105--145.


\bibitem{wall}
G.~E.~Wall,  
\newblock On the conjugacy classes in the unitary, symplectic and orthogonal
  groups. \emph{J. Aust. Math. Soc.} {\bf 3}, (1963), 1--62.
\end{thebibliography}
\end{document}